\numberwithin{equation}{section}
\theoremstyle{plain}
\newtheorem{thm}{Theorem}[section]
\newtheorem{rem}{Remark}[section]
\newtheorem{cor}{Corollary}[section]
\newtheorem{lem}{Lemma}[section]
\newcommand{\dd}{\: \mathrm{d}}
\newcommand{\dE}{\mathbb{E}}
\newcommand{\dR}{\mathbb{R}}
\newcommand{\dP}{\mathbb{P}}
\newcommand{\dC}{\mathbb{C}}
\newcommand{\dN}{\mathbb{N}}
\newcommand{\cD}{\mathcal{D}}
\newcommand{\cJ}{\mathcal{J}}
\newcommand{\cL}{\mathcal{L}}
\newcommand{\cN}{\mathcal{N}}
\newcommand{\cK}{\mathcal{K}}
\newcommand{\rI}{\mathrm{I}}
\newcommand{\cF}{\mathcal{F}}
\newcommand{\cH}{\mathcal{H}}
\newcommand{\cR}{\mathcal{R}}
\newcommand{\cZ}{\mathcal{Z}}
\newcommand{\veps}{\varepsilon}
\newcommand{\wh}{\widehat}
\newcommand{\vp}{\varphi}
\newcommand{\wt}{\widetilde}
\newcommand{\ind}{\mbox{1}\kern-.25em \mbox{I}}
\font\calcal=cmsy10 scaled\magstep1
\def\build#1_#2^#3{\mathrel{\mathop{\kern 0pt#1}\limits_{#2}^{#3}}}
\def\liml{\build{\longrightarrow}_{}^{{\mbox{\calcal L}}}}
\def\videbox{\mathbin{\vbox{\hrule\hbox{\vrule height1.4ex \kern.6em\vrule height1.4ex}\hrule}}}
\def\demend{\hfill $\videbox$\\}
\newcommand{\abs}[1]{\left\vert#1\right\vert}
\email{Bernard.Bercu@math.u-bordeaux.fr}
\email{Adrien.Richou@math.u-bordeaux.fr}
\keywords{Ornstein-Uhlenbeck process with shift, Maximum likelihood estimates, Large deviations}
\begin{document}
\title[Large deviations for the Ornstein-Uhlenbeck process with shift]
{Large deviations for the Ornstein-Uhlenbeck process with shift \vspace{1ex}}
\author{Bernard Bercu}
\address{Universit\'e de Bordeaux, Institut de Math\'ematiques de Bordeaux,
UMR 5251, 351 Cours de la Lib\'eration, 33405 Talence cedex, France.}
\author{Adrien Richou}
\address{
}
\thanks{}

\begin{abstract}
We investigate the large deviation properties of the maximum likelihood estimators for
the Ornstein-Uhlenbeck process with shift. We propose a new approach to establish large deviation principles 
which allows us, via a suitable transformation, to circumvent the classical non-steepness problem.
We estimate simultaneously the drift and shift parameters.
On the one hand, we prove a large deviation principle for the maximum likelihood estimates 
of the drift and shift parameters. Surprisingly, we find that the drift estimator shares the
same large deviation principle as the one previously established for the Ornstein-Uhlenbeck process without shift.
Sharp large deviation principles are also provided. On the other hand, we show that the maximum likelihood 
estimator of the shift parameter satisfies a large deviation principle with a very unusual implicit rate function.
\end{abstract}

\maketitle

\vspace{-2ex}


\section{INTRODUCTION}

Consider the Ornstein-Uhlenbeck process with linear shift $\gamma \in \dR$, observed over the time 
interval $[0,T]$
\begin{equation}
\label{OUPS}
dX_t=\theta X_t dt +\gamma dt+dB_t
\end{equation}
where the drift parameter $\theta<0$, the initial state $X_0=0$ and the driven noise $(B_t)$ is a standard Brownian motion. 
This process is widely used in financial mathematics and it is known as the Vasicek model, see e.g. \cite{Jeanblanc09, V77}.
The maximum likelihood estimates of the unknown parameters $\theta$ and $\gamma$ are given by
\begin{equation}
\label{MLET}
\wh{\theta}_T=\frac{T \int_0^T X_t \,dX_t - X_T \int_0^T X_t \,dt }{T \int_0^T  X_t^2 \,dt - \Bigl(\int_0^T X_t \,dt\Bigr)^2}
\end{equation}
and
\begin{equation}
\label{MLEG}
\wh{\gamma}_T=\frac{X_T \int_0^T  X_t^2 \,dt -  \int_0^T X_t \,dX_t \int_0^T X_t \,dt}
{T \int_0^T  X_t^2 \,dt - \Bigl(\int_0^T X_t \,dt\Bigr)^2}.
\end{equation}
A wide range of literature is available on the asymptotic behavior of $(\wh{\theta}_T)$ and $(\wh{\gamma}_T)$.
It is well-known \cite{K04} that $\wh{\theta}_T$ and $\wh{\gamma}_T$ are both strongly consistent estimators of 
$\theta$ and $\gamma$ and their joint asymptotic normality is given by
\begin{equation}
\label{MLECLT}
\sqrt{T}\begin{pmatrix}
\ \wh{\theta}_{T} - \theta  \\
\ \wh{\gamma}_{T} - \gamma
\end{pmatrix}
\liml \cN(0,L)
\end{equation}
where the limiting matrix 
\begin{equation*}
L=2\begin{pmatrix}
\theta &  \gamma  \\
 \gamma &  \kappa
\end{pmatrix}
\end{equation*}
with $\kappa=(2\gamma^2 + \theta)/2\theta$. Moreover, concentration inequalities for $(\wh{\theta}_T)$ and $(\wh{\gamma}_T)$ 
and moderate deviations were established by Gao and Jiang \cite{GJ09}, while Jiang \cite{J12} recently obtained the joint law of iterated
logarithm as well as Berry-Esseen bounds. In the particular case $\gamma=0$, Florens-Landais and Pham \cite{FP99} proved the large
deviation principle (LDP) for $(\wh{\theta}_T)$, while sharp large deviation principles (SLDP) were established in \cite{BR01}. 
We also refer the reader to \cite{BCS12} for the sharp large deviations in the non-stationary case $\theta\geq 0$ and $\gamma=0$.

Our goal is to extend these investigations by establishing the large deviations properties of the maximum likelihood estimators
of the drift and shift parameters $\theta<0$ and  $\gamma$ in the situation where $\theta$ and $\gamma$
are estimated simultaneously. We shall propose a new approach to prove LDP which allows us, via a suitable transformation,
to circumvent the classical non-steepness problem. In particular, it could be possible to apply the same approach for
Jacobi or Cox-Ingersoll-Ross processes \cite{DZ09, Z02}.

The paper is organized as follows. In Section \ref{S-LDP}, we establish an LDP for the couple
$
\bigl(\wh{\theta}_T, \wh{\gamma}_T\bigr).
$
Via the contraction principle, one can realize that $(\wh{\theta}_T)$ shares the same LDP 
as the one previously established for the Ornstein-Uhlenbeck process without shift. 
One can also observe that $(\wh{\gamma}_T)$ satisfies an LDP with a very unusual implicit rate function. 
An SLDP for the sequence $(\wh{\theta}_T)$ is also provided. Section \ref{S-KSL} is devoted to three keystone lemmas which are at 
the core of our analysis. All the technical proofs of Sections \ref{S-LDP} and \ref{S-KSL} 
are postponed to Appendices A, B, and C.


\section{Large deviations results.}
\setcounter{equation}{0}
\label{S-LDP}

Our large deviations results are as follows.

\begin{thm}
\label{T-LDPG}
The couple $(\wh{\theta}_T,\wh{\gamma}_T)$ satisfies an LDP with good rate function
\begin{equation}
\label{RATEG}
 I_{\theta,\gamma}(c,d)= \left\{ \begin{array}{ccl}
             -{\displaystyle \frac{(\theta-c)^2}{4c}} + {\displaystyle \frac{1}{2}\left( \gamma -\frac{d \theta}{c}\right)^2} & \textrm{if} & {\displaystyle c \leq \frac{\theta}{3}},\\
             {\displaystyle (2c-\theta)}+ {\displaystyle \frac{1}{2}\left( \gamma -\frac{d \theta}{c}\right)^2}  & \textrm{if} & {\displaystyle c \geq \frac{\theta}{3}} \textrm{ and } {\displaystyle c \neq 0},\vspace{1ex}\\
	     {\displaystyle -\theta}  & \textrm{if} &  (c,d)=(0,0),\vspace{2ex}\\
	     {\displaystyle +\infty}  &  \textrm{if} & c=0  \textrm{ and } d \neq 0.
            \end{array} \right.
\end{equation}
\end{thm}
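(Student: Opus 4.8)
The plan is to deduce the statement from a joint large deviation principle for the triple of normalized statistics and then invoke the contraction principle. First I would rewrite the estimators, using $\int_0^T X_t\,dX_t=\tfrac12(X_T^2-T)$, as
\[
\wh{\theta}_T=\frac{\tfrac12(X_T^2-T)-X_T\,\ell_T}{T(p_T-\ell_T^2)},
\qquad
\wh{\gamma}_T=\frac{X_T\,p_T-\tfrac12(X_T^2-T)\,\ell_T}{T(p_T-\ell_T^2)},
\]
where $p_T=\tfrac1T\int_0^T X_t^2\,dt$ and $\ell_T=\tfrac1T\int_0^T X_t\,dt$. Comparing with
\[
g(u,p,\ell)=\left(\frac{u-1}{2(p-\ell^2)},\ \frac{(1-u)\,\ell}{2(p-\ell^2)}\right),
\]
one sees that $(\wh{\theta}_T,\wh{\gamma}_T)$ differs from $g\bigl(\tfrac1T X_T^2,p_T,\ell_T\bigr)$ by error terms controlled by $X_T/T$, hence negligible at the exponential scale because $X_T$ is of order one and in fact $X_T/T$ tends to $0$ faster than exponentially in probability. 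Thus the two quantities are exponentially equivalent, and it suffices to prove an LDP for $V_T:=\tfrac1T\bigl(X_T^2,\int_0^T X_t^2\,dt,\int_0^T X_t\,dt\bigr)$ and to transport it through $g$, the degenerate locus $p=\ell^2$ being handled separately.

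For the LDP of $V_T$ I would begin with an exact computation of the Laplace transform $\dE_{\theta,\gamma}\bigl[\exp\bigl(\mu X_T^2+\rho X_T+\lambda\int_0^T X_t^2\,dt+\nu\int_0^T X_t\,dt\bigr)\bigr]$, obtained via a Girsanov change of probability turning $(\theta,\gamma)$ into a new Ornstein-Uhlenbeck process with shift — new drift $c$ fixed by $\lambda=-\tfrac12(c^2-\theta^2)$, new shift matched to $\nu$ — followed by an elementary Gaussian integration over $X_T$ under the tilted law. This yields a closed form $\exp(T\,\Lambda(\mu,\rho,\lambda,\nu)+o(T))$ on an explicit parameter region, with $\Lambda$ finite and differentiable on the interior of its effective domain but failing to be steep along the part of the boundary created by the terminal variable $X_T^2$: this is the classical obstruction, and plain Gärtner-Ellis then only delivers the upper bound with rate $\Lambda^{*}$. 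The matching lower bound I would establish in two regimes. For states corresponding to a genuine $(c,d)$-Ornstein-Uhlenbeck with shift ($c<0$), it follows from the change of measure itself, by writing $\dP_{\theta,\gamma}(V_T\in\cdot)=\dE_{c,d}\bigl[\exp\bigl(-\log\tfrac{dP_{c,d}}{dP_{\theta,\gamma}}\bigr)\ind_{\{V_T\in\cdot\}}\bigr]$ and using that under $\dP_{c,d}$ the statistics converge to their means while $\tfrac1T\log\tfrac{dP_{c,d}}{dP_{\theta,\gamma}}$ concentrates. For the remaining states one uses the transformation that lies at the heart of the paper: instead of trying to reach these configurations by a stationary tilt, one drives $X_T$ to a value of order $\sqrt T$ near the terminal time at a cost of exponential order and optimizes over its amplitude; equivalently, one runs the whole analysis on a reparametrized statistic for which the cumulant generating function is steep and pulls the LDP back. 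The exact Laplace transform, the concentration and exponential-tightness estimates, and the exponential equivalence are precisely the content of the keystone lemmas.

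Granted the LDP for $V_T$ with good rate function $J=\Lambda^{*}$, the contraction principle gives $I_{\theta,\gamma}(c,d)=\inf\{J(u,p,\ell):g(u,p,\ell)=(c,d)\}$, and this minimization decouples. For $c\neq0$, the constraint forces $\ell=-d/c$, and the $\int_0^T X_t\,dt$-direction then enters $J$ through an additive quadratic contributing exactly $\tfrac12(\gamma-d\theta/c)^2$ — which is why $\wh{\theta}_T$ inherits the shift-free rate function. What remains is a convex one-parameter minimization in $(u,p)$ under $u-1=2c(p-\ell^2)$: the unconstrained minimizer is admissible precisely when $c\le\theta/3$, producing $-(\theta-c)^2/(4c)$, and otherwise it sits on the boundary $u=0$, producing $2c-\theta$, the two branches agreeing at $c=\theta/3$. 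The case $c=0$, which forces $p-\ell^2\to+\infty$, is read off directly from $J$, giving $-\theta$ if $d=0$ and $+\infty$ otherwise. The genuine difficulty, and the point of the new approach, is the lower bound in the non-steep regime; the other ingredients are of a routine though laborious nature.
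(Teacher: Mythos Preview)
Your overall scheme --- LDP for a triple of sufficient statistics, exponential equivalence, then contraction --- is the paper's. But you miss its central simplification, and as a consequence the lower-bound program you sketch has a real gap.

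You work with $\bigl(X_T^2/T,\,p_T,\,\ell_T\bigr)$, whose limiting cumulant generating function, as you correctly observe, is not steep. The paper's ``suitable transformation'' is not an after-the-fact rescue of a non-steep lower bound: it is simply the choice of $\bigl(X_T/\sqrt T,\,p_T,\,\ell_T\bigr)$ instead. Lemma~3.1 computes the limiting cumulant generating function $\Lambda(a,b,c)$ of this triple in closed form on $\{b<\theta^2/2\}$, and $\Lambda$ is steep there (already $\partial_b\Lambda$ contains the term $1/(2\sqrt{\theta^2-2b})$, which diverges at the boundary). G\"artner--Ellis therefore yields the full LDP for the triple in one stroke, and since the contraction map $f(\lambda,\mu,\delta)=\bigl((\lambda^2-1)/(2(\mu-\delta^2)),\,-\delta(\lambda^2-1)/(2(\mu-\delta^2))\bigr)$ depends on $\lambda$ only through $\lambda^2$, the ensuing minimization is exactly yours and the two-branch formula falls out. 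The ``reparametrized statistic for which the cumulant generating function is steep'' that you mention only as an equivalent fallback \emph{is} the paper's whole argument.

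There is also a concrete problem with your two-regime lower bound. Tilting to a $(c,d)$-process with $c<0$ concentrates your triple at a point with $u=0$, and the associated entropy cost is $-\tfrac{(c-\theta)^2}{4c}+\tfrac12(\gamma-d\theta/c)^2$ --- this is the value of the contracted rate on the slice $u=0$ for every $c<0$. But the infimum over the fibre $g^{-1}(c,d)$ is attained at $u=0$ only when $c\le\theta/3$; for $\theta/3<c<0$ it is attained at the interior point $u=(3c-\theta)/(c-\theta)>0$ with the strictly smaller value $2c-\theta+\tfrac12(\gamma-d\theta/c)^2$. So the change-of-measure step does not deliver the sharp lower bound on the full range $c<0$ you assign to it, and incidentally your description of which branch is ``boundary $u=0$'' and which is ``interior'' is reversed.
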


A direct application of the contraction principle \cite{DZ98} immediately leads to the two following corollaries.

\setcounter{cor}{1}

\begin{cor}
\label{C-LDPT}
 The sequence $(\wh{\theta}_T)$ satisfies an LDP with good rate function
\begin{equation}
\label{RATET}
 I_{\theta}(c)= \left\{ \begin{array}{ccc}
             -{\displaystyle \frac{(c-\theta)^2}{4c}} & \textrm{if} & {\displaystyle c \leq \frac{\theta}{3}},\vspace{1ex}\\
             2c-\theta & \textrm{if} &  {\displaystyle c\geq \frac{\theta}{3}}.
            \end{array} \right.
\end{equation}
\end{cor}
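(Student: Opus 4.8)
The plan is to deduce the statement directly from Theorem \ref{T-LDPG} by the contraction principle \cite{DZ98}, applied to the canonical projection $\pi \colon \dR^2 \to \dR$, $\pi(c,d)=c$, which is obviously continuous. Since $(\wh{\theta}_T,\wh{\gamma}_T)$ satisfies an LDP with good rate function $I_{\theta,\gamma}$ and $\wh{\theta}_T=\pi(\wh{\theta}_T,\wh{\gamma}_T)$, the contraction principle guarantees that $(\wh{\theta}_T)$ satisfies an LDP with good rate function
\begin{equation*}
I_{\theta}(c)=\inf_{d \in \dR} I_{\theta,\gamma}(c,d).
\end{equation*}
It then only remains to compute this infimum and to check that it coincides with \eqref{RATET}.

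First I would handle the generic case $c\neq 0$. On both of the regions $\{c\leq \theta/3\}$ and $\{c\geq \theta/3,\ c\neq 0\}$, the expression $I_{\theta,\gamma}(c,d)$ is the sum of a term that does not depend on $d$ and of the nonnegative term $\frac{1}{2}\bigl(\gamma-\frac{d\theta}{c}\bigr)^2$. Choosing $d=\gamma c/\theta$ makes this last term vanish, so the infimum over $d$ is attained and equals $-\frac{(\theta-c)^2}{4c}$ when $c\leq \theta/3$, and $2c-\theta$ when $c\geq \theta/3$ and $c\neq 0$. Since $(\theta-c)^2=(c-\theta)^2$, this is exactly \eqref{RATET}.

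Finally I would treat the boundary value $c=0$. By \eqref{RATEG}, $I_{\theta,\gamma}(0,d)=+\infty$ for every $d\neq 0$ whereas $I_{\theta,\gamma}(0,0)=-\theta$, hence $\inf_{d\in\dR}I_{\theta,\gamma}(0,d)=-\theta$. Because $\theta<0$ we have $0>\theta/3$, and the second branch of \eqref{RATET} evaluated at $c=0$ also yields $2\cdot 0-\theta=-\theta$, so the value at $c=0$ is consistent with the stated formula; this finishes the identification of $I_{\theta}$, the good rate function property being automatically inherited through the contraction principle. The only mild point of care is precisely this bookkeeping on the line $c=0$, where one must use the explicit values assigned to $I_{\theta,\gamma}$ there rather than the $c\geq\theta/3$ formula; no real obstacle arises.
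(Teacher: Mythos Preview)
Your proof is correct and follows essentially the same approach as the paper: the paper states that Corollary~\ref{C-LDPT} is ``a direct application of the contraction principle'' to Theorem~\ref{T-LDPG}, and you carry this out explicitly, including the bookkeeping at $c=0$ that the paper leaves implicit.
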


\begin{cor}
\label{C-LDPG}
 The sequence $(\wh{\gamma}_T)$ satisfies an LDP with good rate function
\begin{equation}
\label{RATEGG}
 I_{\gamma}(d)= \inf \Bigl\{ I_{\theta,\gamma}(c,d) \ \slash \ c \in \dR \Bigr\}.
\end{equation}
\end{cor}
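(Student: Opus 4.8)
The plan is to obtain Corollary \ref{C-LDPG} as an immediate application of the contraction principle to the joint LDP of Theorem \ref{T-LDPG}, exactly as announced just before the statement. Introduce the projection $\pi \colon \dR^2 \to \dR$ given by $\pi(c,d)=d$. This map is continuous, and the image under $\pi$ of the distribution of $(\wh{\theta}_T,\wh{\gamma}_T)$ is precisely the distribution of $\wh{\gamma}_T$. Since Theorem \ref{T-LDPG} asserts that $(\wh{\theta}_T,\wh{\gamma}_T)$ satisfies an LDP with the \emph{good} rate function $I_{\theta,\gamma}$, the contraction principle \cite{DZ98} applies and shows that $(\wh{\gamma}_T)$ satisfies an LDP with good rate function
\begin{equation*}
I_{\gamma}(d) = \inf\bigl\{ I_{\theta,\gamma}(c,d') \ \slash \ (c,d') \in \dR^2,\ \pi(c,d')=d \bigr\} = \inf\bigl\{ I_{\theta,\gamma}(c,d) \ \slash \ c \in \dR \bigr\},
\end{equation*}
with the usual convention $\inf \emptyset = +\infty$. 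This is exactly formula \eqref{RATEGG}.

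There is essentially nothing else to verify. The hypotheses of the contraction principle hold trivially: the continuity of $\pi$ is clear, and the compactness of the sublevel sets of $I_{\theta,\gamma}$ is part of the statement of Theorem \ref{T-LDPG}. The contraction principle then delivers for free the lower semicontinuity and the goodness of $I_{\gamma}$, and guarantees that whenever $I_{\gamma}(d)<+\infty$ the infimum defining it is attained at some $c\in\dR$. Using the piecewise expression \eqref{RATEG}, one also sees that the value $c=0$ contributes $+\infty$ when $d\neq 0$, so that for $d\neq 0$ the infimum in \eqref{RATEGG} may be restricted to $c\neq 0$.

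I do not expect any real obstacle: the corollary is a purely formal consequence of Theorem \ref{T-LDPG}. The only point that would demand genuine work, if one wished to go beyond the statement and make $I_{\gamma}$ explicit, is the one-dimensional minimization over $c$ of the function $c \mapsto \tfrac12(\gamma - d\theta/c)^2$ added to the corresponding branch of $I_{\theta}$; this minimization has no elementary closed form, which is precisely the source of the ``very unusual implicit rate function'' mentioned in the abstract and the reason why \eqref{RATEGG} is left in implicit form.
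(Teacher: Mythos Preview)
Your argument is correct and matches the paper's approach exactly: the paper states just before the corollaries that they follow by a direct application of the contraction principle to Theorem~\ref{T-LDPG}, which is precisely what you do with the projection $\pi(c,d)=d$.
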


\begin{proof}
The proofs are given is Section \ref{S-PLDP}.
\end{proof}

\setcounter{rem}{3}

\begin{rem}
On the one hand, one can observe that $(\wh{\theta}_T)$ shares exactly the same LDP 
as the one previously established by Florens-Landais and Pham \cite{FP99} for the Ornstein-Uhlenbeck without shift $\gamma=0$.
On the other hand, $(\wh{\gamma}_T)$ satisfies an LDP with a very unusual rate function. Unfortunately, an explicit expression
for this rate function is quite complicated. Its very particular form is given in the special cases
$(\theta,\gamma)=(-2, 2)$ and $(\theta,\gamma)=-(2,1)$ in Figure \ref{FIGRF} below.
\end{rem}

\begin{figure}[htb]
\vspace{-6cm}
\begin{minipage}[b] {1\linewidth}
\centering 
\centerline {\includegraphics[width=10cm,height=18cm,angle=0]{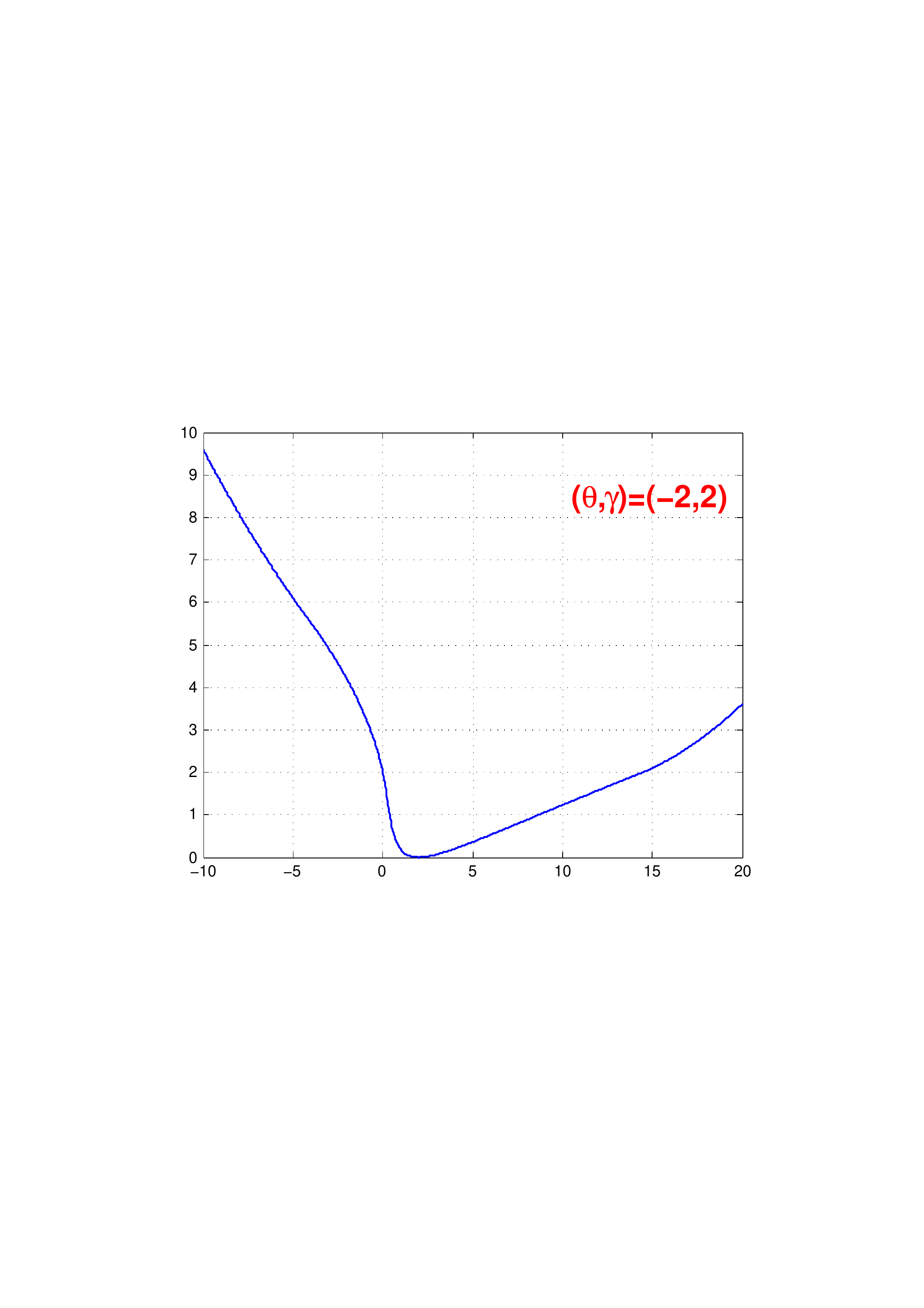} 
\hspace{-4cm}
\includegraphics[width=10cm,height=18cm,angle=0]{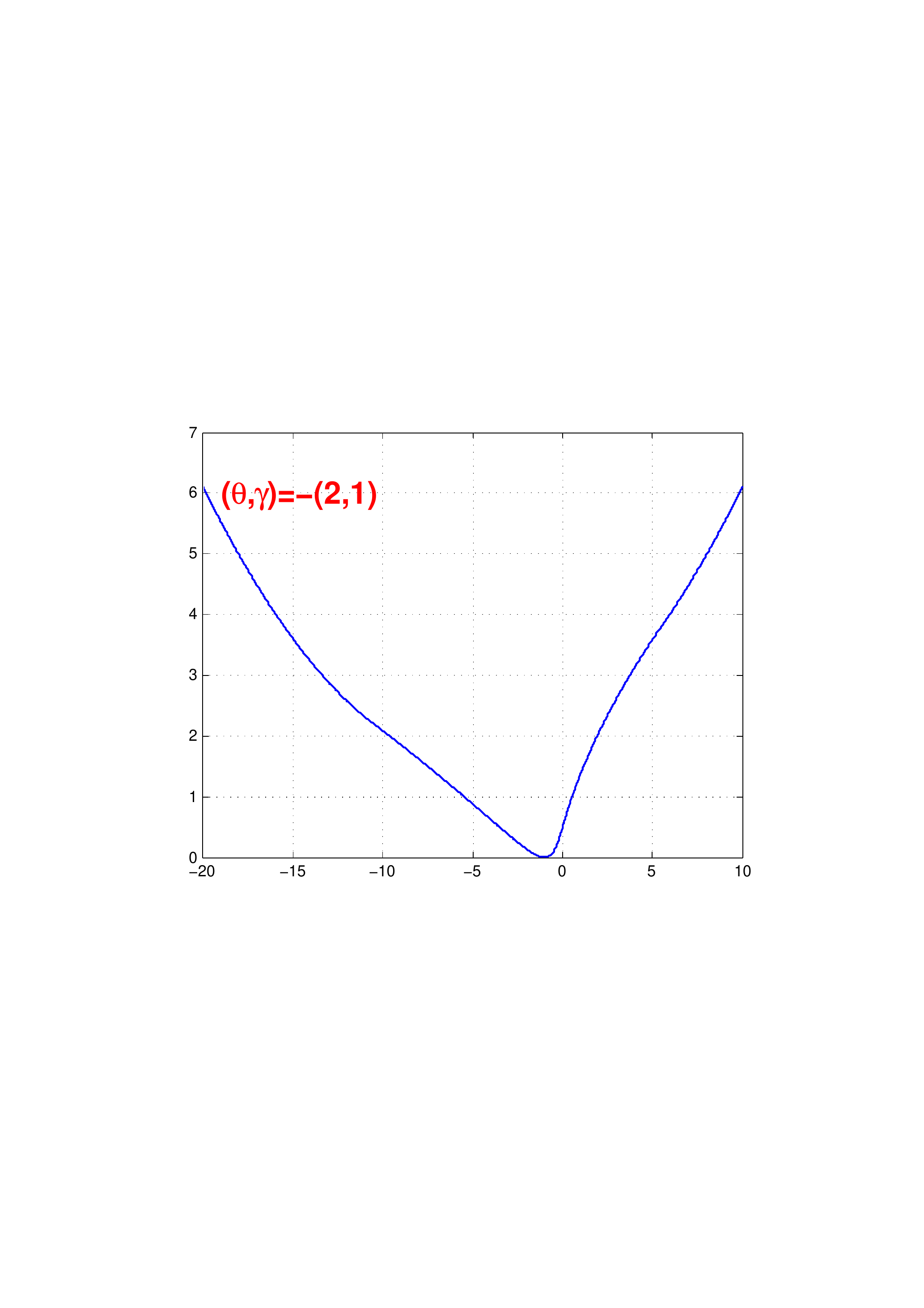} }
\vspace{0.1cm}
\medskip
\end{minipage}
\vspace{-7cm}
\caption{Rate functions for the drift parameter.}
\label{FIGRF}
\end{figure}

\noindent
Our goal is now to improve Corollary \ref{C-LDPT} by a first order SLDP for $(\wh{\theta}_T)$. 
It is of course possible to establish SLDP of any order for the sequence $(\wh{\theta}_T)$. However, for clearness sake, 
we have chosen to restrict ourself to a first order expansion.

\begin{thm}
\label{T-SLDPT}
Consider the Ornstein-Uhlenbeck process with shift given by \eqref{OUPS} where the drift parameter $\theta<0$.
\begin{enumerate}
\item[a)] For all $\theta < c < \theta/3$, we have for $T$ large enough,
\begin{equation}
\label{E-SLDPT1}
\dP(\wh{\theta}_T \geq c) =\frac{e^{-TI_{\theta}(c)+J(c)}}{a_c \sigma_c  \sqrt{2\pi T}} \Bigl(1+o(1)\Bigr)
\end{equation}
while for $c<\theta$,
\begin{equation}
\label{E-SLDPT2}
   \dP(\wh{\theta}_T \leq c) =-\frac{e^{-TI_{\theta}(c)+J(c)}}{a_c \sigma_c  \sqrt{2\pi T}} \Bigl(1+o(1)\Bigr)
  \end{equation}
where 
\begin{equation}
\label{DEFACSIGC1}
   a_c=\frac{c^2-\theta^2}{2c} \hspace{1cm} \textrm{and} \hspace{1cm} \sigma_c^2=-\frac{1}{2c},
\end{equation}
\begin{equation}
\label{DEFJ}
J(c)=-\frac{1}{2}\log \left( \frac{\theta^2(c+\theta)(3c-\theta)}{4c^4} \right) +\gamma^2 \frac{(c-\theta)^2}{4c\theta^2}.
\end{equation}
\item[b)] For all $c>\theta/3$ with $c \neq 0$, we have for $T$ large enough,
\begin{equation}
\label{E-SLDPT3}
 \dP(\wh{\theta}_T \geq c) =\frac{e^{-TI_{\theta}(c)+K(c)}}{ a_c \sigma_c\sqrt{2\pi T}} \Bigl(1+o(1)\Bigr)
 \end{equation}
   where
   \begin{equation}
   \label{DEFACSIGC2}
   a_c=2(c-\theta) \hspace{1cm} \textrm{and} \hspace{1cm} \sigma_c^2=\frac{c^2}{2(2c-\theta)^3}.
  \end{equation} 
  \begin{equation}
   \label{DEFK}
   K(c)=-\frac{1}{2}\log \left( \frac{\theta^2(c-\theta)(3c-\theta)}{4c^2(2c-\theta)^2} \right) -\frac{\gamma^2}{\theta^2} (2c-\theta).
  \end{equation}
\item[c)] For $c=\theta/3$, we have for $T$ large enough,
\begin{equation}
\label{E-SLDPT4}
   \dP(\wh{\theta}_T \geq c) =\frac{e^{-TI_{\theta}(c)+ \gamma^2b_{\theta}}}{6 \pi T^{1/4}}
\frac{\Gamma(1/4)}{\sqrt{2}a_{\theta}^{3/4}\sigma_{\theta}} \Bigl(1+o(1)\Bigr)
\end{equation}
where
\begin{equation}
\label{DEFACSIGC4}
a_{\theta}= -\frac{4\theta}{3},
\hspace{1cm}b_{\theta}= \frac{1}{3\theta}, \hspace{1cm}
\sigma_{\theta}^2=-\frac{3}{2\theta}.
\end{equation}
\item[d)] For $c=0$, we have for $T$ large enough,
\begin{equation}
\label{E-SLDPT5}
   \dP(\wh{\theta}_T \geq c) =\frac{\sqrt{2}e^{-TI_{\theta}(c)+\frac{\gamma^2}{\theta}+2}}{\sqrt{2\pi T}\sqrt{-\theta}} \Bigl(1+o(1)\Bigr).
\end{equation}

 \end{enumerate}
\end{thm}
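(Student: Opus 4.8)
The plan is to obtain the sharp large deviation estimates for $(\wh{\theta}_T)$ by the usual route: write the event $\{\wh{\theta}_T \geq c\}$ in terms of a pair of quadratic functionals of the observed path, perform a change of probability governed by the associated normalized cumulant generating function, and then carry out a precise Laplace-type expansion around the saddle point. The starting point is the algebraic identity
\begin{equation*}
\{\wh{\theta}_T \geq c\} = \{ Z_T(c) \geq 0 \}, \qquad Z_T(c) = \int_0^T X_t \dd X_t - c \int_0^T X_t^2 \dd t - \frac{1}{T}\Bigl(X_T - c\int_0^T X_t \dd t\Bigr)\int_0^T X_t \dd t,
\end{equation*}
which after substituting $dX_t = \theta X_t\dd t + \gamma\dd t + dB_t$ turns $Z_T(c)$ into a linear-quadratic form in $(X_t)$ plus a stochastic integral. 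The three keystone lemmas announced for Section \ref{S-KSL} should supply exactly the object we need: the asymptotic behaviour, as $T \to \infty$, of the Laplace transform
\begin{equation*}
\dE\Bigl[\exp\Bigl( a \int_0^T X_t \dd X_t + b \int_0^T X_t^2 \dd t + \text{(linear and endpoint terms)} \Bigr)\Bigr],
\end{equation*}
together with its second-order (sharp) expansion. Granting those lemmas, the proof is essentially a bookkeeping exercise on top of them.

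First I would treat the generic regimes (a) and (b). Fix $c$ in the relevant range and introduce the tilted probability $\dP_T$ whose density with respect to $\dP$ is proportional to $\exp(a_c Z_T(c) + \ldots)$, where the tilt parameter is chosen so that under $\dP_T$ the functional $Z_T(c)$ has asymptotic mean zero; this is the analogue of the effective drift $c$ replacing $\theta$, and it is here that the two cases split, according to whether the tilted process is still an Ornstein-Uhlenbeck process of the same type ($c \leq \theta/3$, giving $a_c = (c^2-\theta^2)/2c$, $\sigma_c^2 = -1/2c$) or whether the quadratic generator has crossed into the regime where a different normalization applies ($c \geq \theta/3$, giving $a_c = 2(c-\theta)$, $\sigma_c^2 = c^2/2(2c-\theta)^3$). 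In each case I would write
\begin{equation*}
\dP(\wh{\theta}_T \geq c) = e^{-T I_\theta(c)} \, \dE_T\Bigl[ e^{-a_c Z_T(c) - \ldots} \indicatrice_{\{Z_T(c) \geq 0\}}\Bigr],
\end{equation*}
and then invoke a central-limit-type expansion for $Z_T(c)/\sqrt{T}$ under $\dP_T$: the residual expectation behaves like $\int_0^\infty e^{-a_c \sigma_c \sqrt{T}\, x}$ times a Gaussian density, which produces the factor $1/(a_c \sigma_c \sqrt{2\pi T})$. The extra constants $J(c)$ and $K(c)$, and in particular the $\gamma$-dependent summands $\gamma^2 (c-\theta)^2/4c\theta^2$ and $-(\gamma^2/\theta^2)(2c-\theta)$, come from the finite-$T$ corrections to the Laplace transform — precisely the second-order term in the keystone lemmas — due to the nonzero shift: the shift contributes an affine perturbation of the path that feeds a deterministic Gaussian-type integral whose value is exactly these quadratic-in-$\gamma$ constants. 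For the lower-tail estimate \eqref{E-SLDPT2} the same computation applies with the inequality reversed, which flips the sign of the $\int_0^\infty$ integral and explains the minus sign.

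Cases (c) and (d) are the degenerate boundary points and require separate, more delicate asymptotics. At $c = \theta/3$ the two branches of the rate function meet, the second derivative of the limiting cumulant generating function degenerates, and the Gaussian approximation breaks down; the correct local behaviour is governed by a quartic rather than quadratic exponent, which is why the answer involves $T^{-1/4}$ and $\Gamma(1/4)$ instead of $T^{-1/2}$ and $\sqrt{2\pi}$ — one expands $\int_0^\infty e^{-\alpha T^{1/2} x^2 - \beta T^{1/4} x}\dd x$ type integrals, or rather the appropriate Airy/quartic analogue, using the scaling dictated by the keystone lemma at the critical point. The value $c = 0$ is the other special point: there $\wh{\theta}_T \geq 0$ forces the quadratic form in the exponent to be exactly the null one, the tilt is trivial, and the Laplace transform is evaluated at the edge of its domain of steepness, which is exactly the ``non-steepness'' phenomenon the introduction promises to circumvent via the transformation; the constant $2 + \gamma^2/\theta$ and the $\sqrt 2$ prefactor fall out of that limiting evaluation.

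The main obstacle I anticipate is the bookkeeping at the two degenerate values $c = \theta/3$ and $c = 0$: away from them everything is a routine (if lengthy) saddle-point computation once the keystone lemmas are in hand, but at the critical points one must identify the correct local scaling of the tilted functional, justify the exchange of limit and integral uniformly near the edge of the effective domain, and track the shift-dependent constants through a non-Gaussian limit. Controlling these — especially showing that the $\gamma$-contribution survives as the clean constants $\gamma^2 b_\theta$ and $\gamma^2/\theta$ and does not interact with the quartic/edge behaviour — is where the real work lies, and it is precisely where the ``suitable transformation'' advertised in the introduction is used to reduce the problem to a steep one for which the sharp large deviation machinery of \cite{BR01} applies verbatim.
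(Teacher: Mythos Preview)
Your outline is right in spirit for part (a), but it misidentifies the mechanism in parts (b)--(d), and that is where the substance of the proof lies.

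\textbf{Part (b).} You describe both (a) and (b) as a fixed tilt at $a_c$ followed by a Gaussian CLT for $Z_T(c)/\sqrt{T}$ under the tilted law. That works for (a) because $a_c=(c^2-\theta^2)/2c$ lies in the interior of the domain $\cD_\cL$ and $L''(a_c)=\sigma_c^2>0$. For $c>\theta/3$, however, $L'$ has no zero inside $\cD_\cL$: the infimum of $L$ is attained at the \emph{boundary} point $a_c=2(c-\theta)$, where the second-order correction $H$ diverges (it contains $\log\tau(a,b)$ with $\tau(a_c,-ca_c)=0$). The paper therefore uses a \emph{time-dependent} tilt $a_T$ solving $L'(a_T)+T^{-1}H'(a_T)=0$, with $a_T\to a_c$ at rate $1/T$. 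Under this tilt the correct normalization is $U_T=Z_T(1)/T$, not $Z_T/\sqrt{T}$, and the limiting law is \emph{not} Gaussian but $b_c(\xi^2-1)$ with $\xi\sim\cN(0,1)$; the factor $1/(a_c\sigma_c\sqrt{2\pi T})$ arises from the Parseval integral of this chi-squared characteristic function, via Lemma~7.3 of \cite{BR01}, not from a Gaussian integral. Your description ``$\int_0^\infty e^{-a_c\sigma_c\sqrt{T}x}$ times a Gaussian density'' would not produce the correct constant $K(c)$, which contains the boundary contribution $-\tfrac12\log\bigl(\theta^2(c-\theta)(3c-\theta)/4c^2(2c-\theta)^2\bigr)$ coming precisely from the blow-up of $H$ along $a_T\to a_c$.

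\textbf{Part (c).} This is not a quartic-exponent phenomenon. At $c=\theta/3$ the boundary tilt again applies, but now $a_T-a_\theta$ is of order $T^{-1/2}$ rather than $T^{-1}$, and under $\dP_T$ the rescaled variable $Z_T(1)/\sqrt{T}$ converges to the mixture $\sigma_\theta\zeta+d_\theta(\xi^2-1)$ with $\zeta,\xi$ independent standard Gaussians. The $T^{-1/4}$ factor comes from $A_T$, which picks up $(\tau(a_T))^{-1/2}\sim T^{1/4}$ through $H$, combined with $B_T\sim T^{-1/2}$; the $\Gamma(1/4)$ is the value of the Parseval integral $\int_\dR\Phi(u)\dd u$ for that mixed limit law.

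\textbf{Part (d).} The paper does not use the tilt machinery at all here: for $c=0$ one has $\{\wh\theta_T\geq 0\}=\{X_T^2-2X_T\overline{X}_T\geq T\}$, and since $(X_T,\overline{X}_T)$ is explicitly jointly Gaussian, the probability is computed by conditioning on $\overline{X}_T$ and applying standard Gaussian tail asymptotics directly. The constant $e^{2+\gamma^2/\theta}$ and the prefactor $\sqrt{2}$ come from integrating these conditional tails against the density of $\overline{X}_T$.

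So the genuine gap is that you treat (b) as a relabelled version of (a), whereas the non-steepness forces a moving saddle point and a non-Gaussian limit; fixing this is exactly the content of Lemmas~\ref{L-BT1} and \ref{L-BT2} in the paper.
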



\section{Three keystone lemmas.}
\setcounter{equation}{0}
\label{S-KSL}

First of all, let us recall some elementary properties of the Ornstein-Uhlenbeck process with linear shift \cite{Jeanblanc09}, \cite{K04}.
One can observe that the process $(X_T)$ can be rewritten as
$X_T=Y_T+ m_T$ where
$$
m_T=\dE[X_T]=-\frac{\gamma}{\theta}(1-e^{\theta T})
$$
and $(Y_T)$ is the Ornstein-Uhlenbeck process without shift
$$
Y_T= e^{\theta T}\int_0^T e^{-\theta t}dB_t.
$$
By the same token, if
$$
\overline{X}_T= \frac{1}{T} \int_0^T X_t dt
\hspace{1cm} \text{and}\hspace{1cm} 
\overline{Y}_T= \frac{1}{T} \int_0^T Y_t dt,
$$
we clearly have
$\overline{X}_T=\overline{Y}_T+ \mu_T$
where
$$
\mu_T=\dE[\overline{X}_T]=-\frac{\gamma}{\theta}\Bigl(1+\frac{1}{\theta T}(1-e^{\theta T}) \Bigr).
$$
Therefore, the random vector
$$
\begin{pmatrix}
X_T \\
\overline{X}_T
\end{pmatrix} \sim \cN \left( \begin{pmatrix}
m_T \\
\mu_T
\end{pmatrix}\!, \ \Gamma_T(\theta) \right)
$$
where the covariance matrix $\Gamma_T(\theta)$ is given by 
\begin{equation}
\label{DEFGAMT}
\Gamma_T(\theta)= 
\begin{pmatrix}
a_T(\theta) & b_T(\theta) \vspace{1ex} \\
b_T(\theta) & c_T(\theta)
\end{pmatrix}
\end{equation}
with 
\begin{eqnarray*}
a_T(\theta) &\!\!=\!\!& \frac{1}{2\theta} \Bigl( e^{2 \theta T} - 1 \Bigr), \hspace{0.5cm}
b_T (\theta)=   \frac{1}{2\theta^2 T} \Bigl( e^{\theta T} - 1 \Bigr)^2, \\
c_T(\theta) &\!\!=\!\!& \frac{1}{\theta^2 T^2} \Bigl( \frac{1}{2\theta} \Bigl( e^{2 \theta T} - 1 \Bigr) - \frac{2}{\theta} \Bigl( e^{\theta T} - 1\Bigr) +T\Bigr).
\end{eqnarray*}
Denote by $\Lambda_T$ the normalized cumulant generating function of the triplet
$$\left( \frac{X_T}{\sqrt{T}}, \frac{1}{T}\int_0^T X_t^2dt, \frac{1}{T} \int_0^T X_t dt\right)$$
defined, for all $(a,b,c) \in \dR^3$, by
$$\Lambda_T(a,b,c) = \frac{1}{T} \log \dE \left[\exp \left( a\sqrt{T} X_T + b \int_0^T X_t^2 dt + c \int_0^T X_t dt \right) \right].$$
Our first lemma deals with the extended real function $\Lambda$ defined as the pointwise limit of $\Lambda_T$.
\begin{lem}
\label{L-LEMFOND1}
Let $\cD_\Lambda$ be the effective domain of $\Lambda$
$$\cD_\Lambda = \Bigl\{ (a,b,c) \in \dR^3 \ \slash \ b < \theta^2/2 \Bigr\}$$
and set $\vp(b) = \sqrt{\theta^2 -2b}$. Then, for all $(a,b,c) \in \cD_\Lambda$, we have
\begin{equation}
\label{DEFLL}
\Lambda(a,b,c) = -\frac{1}{2}\left(\theta  + \vp(b)+ \gamma^2\right)
+\frac{1}{2}\left(\frac{a^2}{\vp(b) - \theta}\right)+\frac{1}{2} \left( \frac{c-\theta \gamma}{\vp(b)}\right)^2.
\end{equation}
\end{lem}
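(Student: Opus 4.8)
The plan is to compute $\Lambda_T(a,b,c)$ explicitly for finite $T$ and then take the pointwise limit as $T\to\infty$, restricting to the regime $b<\theta^2/2$ where the relevant Gaussian-type integral converges. First I would use the Girsanov/Cameron-Martin machinery: the quantity $\int_0^T X_t^2\,dt$ appearing in the exponent is quadratic in the path, so $\dE[\exp(a\sqrt T X_T + b\int_0^T X_t^2\,dt + c\int_0^T X_t\,dt)]$ should be evaluated by a change of measure that turns the Ornstein-Uhlenbeck process with drift $\theta$ and shift $\gamma$ into one with modified drift $-\vp(b) = -\sqrt{\theta^2-2b}$ and a modified shift. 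Concretely, I would write $X_t = Y_t + m_t$ with $m_t = -\frac{\gamma}{\theta}(1-e^{\theta t})$ as in the preamble, expand the quadratic form, and apply the classical formula for the Laplace transform of $\bigl(\int_0^T X_t^2\,dt, X_T, \int_0^T X_t\,dt\bigr)$ for a Gaussian process; alternatively, one can solve the associated Riccati ODE directly. Either route yields $\Lambda_T(a,b,c)$ as an explicit expression involving $e^{\theta T}$, $e^{\vp(b)T}$, $T$, and the parameters.

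The key steps, in order, are: (i) reduce to the centered process $Y_t$ by substituting $X_t=Y_t+m_t$, so that the exponent becomes a quadratic-plus-linear functional of $(Y_t)$ with deterministic coefficients depending on $\gamma$, $\theta$, $a$, $b$, $c$; (ii) compute the resulting Gaussian expectation — the $b\int_0^T Y_t^2\,dt$ term is handled by the standard result that $\dE[\exp(b\int_0^T Y_t^2\,dt)] = \bigl(\text{something}\bigr)^{-1/2}$ with the ``something'' controlled by $\vp(b)=\sqrt{\theta^2-2b}$, and the linear terms contribute a Gaussian shift; (iii) collect all exponentially growing terms: as $T\to\infty$, $\frac1T\log$ of the pre-factor $(\cdots)^{-1/2}$ tends to $-\frac12(\vp(b)-(-\theta)) = -\frac12(\vp(b)+\theta)$, the constant $-\frac{\gamma^2}{2}$ comes from the shift's quadratic cost, the term $\frac12\frac{a^2}{\vp(b)-\theta}$ comes from the endpoint weight $a\sqrt T X_T$ interacting with the modified process, and $\frac12\bigl(\frac{c-\theta\gamma}{\vp(b)}\bigr)^2$ comes from the linear drift term $c\int_0^T X_t\,dt$ combined with the shift; (iv) check that all other terms in $\Lambda_T$ are $o(1)$, in particular those involving $e^{\theta T}\to 0$ and the $\frac1T$-type corrections, and verify that the claimed effective domain is exactly $\{b<\theta^2/2\}$ (outside it $\vp(b)$ is imaginary and the integral diverges, giving $\Lambda=+\infty$, while the dependence on $a,c$ never causes divergence — hence $\cD_\Lambda$ does not constrain $a$ or $c$).

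I expect the main obstacle to be the bookkeeping in step (ii)–(iii): getting the quadratic form $b\int_0^T X_t^2\,dt + a\sqrt T X_T + c\int_0^T X_t\,dt$ diagonalized correctly and identifying which combinations of boundary terms survive in the limit. The endpoint term $a\sqrt T X_T$ is slightly delicate because of the $\sqrt T$ scaling: one must verify that it produces a finite, $T$-independent contribution $\frac12\frac{a^2}{\vp(b)-\theta}$ rather than something vanishing or exploding, which amounts to checking that the variance of $X_T$ under the tilted measure stabilizes. Likewise, separating the ``$\gamma$-part'' cleanly — showing that the shift contributes exactly $-\frac{\gamma^2}{2} + \frac12\bigl(\frac{c-\theta\gamma}{\vp(b)}\bigr)^2 - \frac12\bigl(\frac{c}{\vp(b)}\bigr)^2\cdot(\text{correction})$, and that this collapses to the stated formula — requires care but is ultimately routine completion of squares. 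Since the excerpt defers ``all the technical proofs'' to the appendices, I would state the finite-$T$ formula for $\Lambda_T$ as an intermediate display, then pass to the limit term by term, deferring the lengthiest algebra to Appendix C.
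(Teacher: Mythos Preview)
Your Girsanov-then-limit strategy is correct and is exactly what the paper does, but the paper's execution is tighter than the concrete steps you describe. Rather than first centering $X_t=Y_t+m_t$ under the original drift $\theta$ and then separately handling $b\int_0^T Y_t^2\,dt$ via a Laplace-transform formula or a Riccati ODE, the paper applies Girsanov in one shot to pass from $\dP_{\theta,\gamma}$ to $\dP_{\vp,0}$ with $\vp=\sqrt{\theta^2-2b}$ and zero shift. With this choice the coefficient of $\int_0^T X_t^2\,dt$ in the exponent becomes $\tfrac12(2b-\theta^2+\vp^2)=0$, so the path-quadratic term disappears entirely; the remaining $(\theta-\vp)\int_0^T X_t\,dX_t$ is converted by It\^o into $\tfrac12(\theta-\vp)(X_T^2-T)$. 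Everything then depends only on the two-dimensional Gaussian vector $V_T=(X_T,\overline X_T)$, which under $\dP_{\vp,0}$ has mean zero and an explicit covariance $\Gamma_T(\vp)$. The computation of $\Lambda_T$ reduces to the standard formula for $\dE[\exp(-\tfrac12 V_T'JV_T + U_T'V_T)]$ with a fixed $2\times2$ matrix $J$, namely $-\tfrac1{2T}\log\det(I+J\Gamma_T(\vp)) + \tfrac1{2T}U_T'\Gamma_T(\vp)(I+J\Gamma_T(\vp))^{-1}U_T$ plus the deterministic pre-factor $\tfrac12(\vp-\theta-\gamma^2)$; taking $T\to\infty$ in these finite-dimensional quantities gives \eqref{DEFLL} directly. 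Your route would also reach the answer, but the bookkeeping you flag as ``the main obstacle'' in steps (ii)--(iii) largely evaporates once you realize that the right Girsanov tilt kills the infinite-dimensional quadratic form and leaves only a $2\times2$ linear-algebra problem.
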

\begin{proof}
The proof in given in Appendix A.
\end{proof}

A direct calculation shows that the function $\Lambda$ is steep, which means that the norm of its gradient
goes to infinity for any sequence in the interior of $\cD_\Lambda$ converging to a boundary point of $\cD_\Lambda$.  
This is the reason why we are able to deduce an LDP for the couple $(\wh{\theta}_T, \wh{\gamma}_T)$.
In order to establish the SLDP for the drift parameter $\wh{\theta}_T$, it is necessary to modify our strategy.
To be more precise, we shall now focus our attention on the normalized cumulant generating function $\cL_T$ of the couple
$$\left(\frac{1}{T}\int_0^T (X_t - \overline{X}_T) dX_t, \frac{1}{T}S_T\right)$$
where
$$
S_T= \int_0^T (X_t - \overline{X}_T)^2dt,
\vspace{2ex}
$$
which is given, for all $(a,b) \in \dR^2$, by
$$\cL_T(a,b) = \frac{1}{T} \log \dE \left[\exp \left( a\int_0^T (X_t - \overline{X}_T) dX_t + b S_T \right) \right].$$
\  \\
The reason for this is twofold. On the one hand, it is not possible to deduce an SLDP for $(\wh{\theta}_T)$ via $\Lambda_T$.
On the other hand, it immediately follows from \eqref{MLET} that
\begin{equation}
\label{EQMLET}
\wh{\theta}_T =\frac{\int_0^T (X_t - \overline{X}_T)dX_t}{S_T}.
\end{equation}
However, one can observe that for all $c \in \dR$, $\dP(\wh{\theta}_T\geq c)=\dP(\cZ_T(1,-c) \geq 0)$ where,
for all $(a,b) \in \dR^2$, $\cZ_T(a,b)$ stands for the random variable 
\begin{equation}
\label{DEFZTAB}
\cZ_T(a,b) = a\int_0^T (X_t - \overline{X}_T) dX_t +b S_T.
\end{equation}
Our second lemma provides the full asymptotic expansion for $\cL_T$.
Denote by $\cL$ the extended real function defined as the pointwise limit of $\cL_T$.

\begin{lem} 
\label{L-LEMFOND2}
Let $\cD_\cL$ be the effective domain of $\cL$
$$
\cD_\cL= \Bigl\{ (a,b) \in \dR^2  \ \slash \ \theta^2 -2b > 0 \quad \textrm{and} \quad a + \theta < \sqrt{\theta^2-2b} \Bigr\}
$$
and set $\vp(b) = \sqrt{\theta^2 -2b}$, $\tau(a,b)=\vp(b)-(a+\theta)$. Then, for all $(a,b) \in \cD_\cL$ and $T$ large enough, we have 
\begin{equation} 
\label{E-DECFOND}
\cL_T(a,b)=\cL(a,b) + \frac{1}{T} \cH(a,b) + \frac{1}{T^2} \cR_T(a,b)
\end{equation}
where
\begin{eqnarray}
\cL(a,b)   &=& -\frac{1}{2} \left(a + \theta + \sqrt{\theta^2-2b} \right),\label{E-DEFL}\\
\cH(a,b)   &=& -\frac{1}{2} \log \left( \frac{\tau(a,b) \theta^2}{2\vp^3(b)} \right)-\frac{\gamma^2(a + \theta + \sqrt{\theta^2-2b})}{2\theta^2}.
\label{E-DEFH}
\end{eqnarray}
\ \vspace{1ex} \\
Moreover, the remainder $\cR_T(a,b)$ may be explicitly calculated as a rational function of $a$, $b$, $T$ and $\exp(-\vp(b)T)$.
 In addition, $\cR_T$ can be extended to the two-dimensional complex plane and it is a bounded analytic function as soon as the real parts of its arguments belong to the interior of $\cD_\cL$.
%
\end{lem}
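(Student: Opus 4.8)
The strategy is to compute $\cL_T(a,b)$ exactly by reducing the expectation of $\exp(a\int_0^T(X_t-\overline X_T)\,dX_t + bS_T)$ to a finite-dimensional Gaussian integral, then to extract the asymptotic expansion \eqref{E-DECFOND} from the resulting explicit formula. First I would use It\^o's formula to linearize the stochastic integral: since $\int_0^T X_t\,dX_t = \tfrac12(X_T^2 - T)$ and $\int_0^T \overline X_T\,dX_t = \overline X_T\,X_T$, one gets
\begin{equation*}
\int_0^T (X_t - \overline X_T)\,dX_t = \frac{1}{2}(X_T^2 - T) - \overline X_T X_T,
\end{equation*}
and likewise expand $S_T = \int_0^T X_t^2\,dt - T\overline X_T^2$. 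Thus $\cZ_T(a,b)$ is a quadratic form in the Gaussian triple $(X_T, \overline X_T, \int_0^T X_t^2\,dt)$, so the Laplace transform is governed by the normalized cumulant generating function $\Lambda_T$ of Lemma \ref{L-LEMFOND1} evaluated at arguments that are affine in $(a,b)$ — but because of the $\overline X_T X_T$ and $T\overline X_T^2$ cross terms this is not a direct substitution. The cleaner route is therefore a Girsanov/change-of-measure computation directly on path space: the exponential weight $\exp(a\int_0^T X_t\,dX_t + (b-\tfrac12 a^2)\int_0^T X_t^2\,dt)$ corresponds to shifting the drift from $\theta$ to $\theta' = \theta + a$ with an extra potential term, after which the remaining $\overline X_T$-dependence is a Gaussian functional that can be integrated out explicitly. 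Either way, one arrives at a closed form for $\cL_T(a,b)$ as the logarithm of a ratio built from $\vp(b) = \sqrt{\theta^2-2b}$, $\tau(a,b) = \vp(b)-(a+\theta)$, $T$, and $e^{-\vp(b)T}$, divided by $T$.

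Once the exact formula is in hand, the expansion is a matter of isolating the terms of order $T$, $\log$-order (i.e. $O(1)$), and the rest. The leading term $-\tfrac12 T(a+\theta+\vp(b))$ will come from the dominant exponential growth; dividing by $T$ gives $\cL(a,b)$ in \eqref{E-DEFL}. The $O(1/T)$ term $\cH(a,b)$ in \eqref{E-DEFH} splits naturally into two pieces: the $-\tfrac12\log(\tau(a,b)\theta^2/2\vp^3(b))$ comes from the determinant/normalization of the Gaussian integration (the prefactor of the exact Laplace transform), while the $-\gamma^2(a+\theta+\vp(b))/2\theta^2$ comes from the shift-dependent mean $m_T, \mu_T$ of the process — this is precisely the $\gamma$-contribution, and one should check it matches the structure already seen in $\Lambda$ of Lemma \ref{L-LEMFOND1}. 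Everything not captured by $\cL$ and $\cH/T$ is collected into $T^{-2}\cR_T(a,b)$; that $\cR_T$ is a rational function of $a,b,T,e^{-\vp(b)T}$ is immediate from the exact formula, and its boundedness and analyticity for real parts in the interior of $\cD_\cL$ follows because in that region $\vp(b)$ has positive real part (so $e^{-\vp(b)T}\to 0$ and stays bounded), $\tau(a,b)$ stays away from $0$, and no other denominators vanish; extending to complex arguments is then just noting the formula is meromorphic with poles only on the boundary.

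The role of the domain condition is worth pinning down: $\theta^2 - 2b > 0$ is exactly what makes $\vp(b)$ real and the quadratic form in $\int_0^T X_t^2\,dt$ integrable (as in $\cD_\Lambda$), while $a + \theta < \vp(b)$, i.e. $\tau(a,b) > 0$, is the new constraint ensuring the Gaussian integration over the remaining degree of freedom (morally, over $\overline X_T$ or equivalently over $X_T$ under the tilted measure) converges — if $\tau(a,b)\le 0$ the integral diverges and $\cL_T = +\infty$. I expect the main obstacle to be organizational rather than conceptual: carrying out the exact computation of $\cL_T$ without sign errors, since it involves a change of measure \emph{and} the elimination of the $\overline X_T$ cross terms, and then cleanly separating the genuinely exponential, constant, and remainder parts of a somewhat unwieldy rational expression. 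The verification that the constant term reassembles into exactly the compact form \eqref{E-DEFH} — in particular that the $\vp^3(b)$ in the logarithm and the precise coefficient $-\gamma^2/2\theta^2$ emerge — is where the calculation has to be done with care. These computations are relegated to Appendix B.
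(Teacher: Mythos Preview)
Your overall architecture matches the paper's: reduce $\cL_T(a,b)$ to a finite-dimensional Gaussian computation via a Girsanov change of measure, obtain an exact closed form, then read off the expansion. The difference is in \emph{which} change of measure you take. You propose shifting the drift from $\theta$ to $\theta'=\theta+a$, which would absorb the $a\int_0^T X_t\,dX_t$ term but still leave a potential proportional to $\int_0^T X_t^2\,dt$ in the exponent; your description of the residual as merely ``the remaining $\overline X_T$-dependence'' glosses over this, and you would need either a second change of measure or a separate computation of the Laplace transform of $\int_0^T X_t^2\,dt$ under the partially tilted law. The paper instead changes to $\dP_{\vp,0}$, with drift $\vp=\sqrt{\theta^2-2b}$ and zero shift: this specific choice kills \emph{both} the $\int_0^T X_t\,dX_t$ and $\int_0^T X_t^2\,dt$ terms in one stroke, so that what remains is a quadratic form in the two-vector $V_T=(X_T,\overline X_T)^\prime$, which under $\dP_{\vp,0}$ is centered Gaussian with covariance $\Gamma_T(\vp)$. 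One then obtains immediately
\[
\cL_T(a,b)=\frac{\tau-\gamma^2}{2}-\frac{1}{2T}\log\det M_T(a,b)+\frac{\gamma^2}{2T}\,U_T^\prime\Gamma_T(\vp)M_T^{-1}(a,b)U_T,
\]
with $M_T(a,b)=\rI_2+J_T\Gamma_T(\vp)$ for an explicit $2\times 2$ matrix $J_T$; the $-\tfrac12\log(\tau\theta^2/2\vp^3)$ piece of $\cH$ emerges from expanding $\det M_T$, and the $-\gamma^2(a+\theta+\vp)/2\theta^2$ piece from expanding the quadratic form in $U_T$. Your route would work, but the paper's choice of tilt collapses the problem to two dimensions at once and makes the $\vp^3$ and the $\gamma^2/\theta^2$ coefficient fall out with much less bookkeeping.
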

\begin{proof}
The proof in given in Appendix B.
\end{proof}
\noindent
Our third lemma relies on the Karhunen-Lo\`eve expansion of the process $(X_T)$.
Denote by $\cF$ the class of all real-valued continuous functions $f$ such that
$f(x)=x^2 h(x)$ where $h$ is continuous. Moreover, let
$g$ be the spectral density of the stationary Ornstein-Uhlenbeck process
without shift $\gamma=0$ given, for all $x \in \dR$, by
\begin{equation}
\label{SPECDENS}
g(x)=\frac{1}{\theta^2 + x^2}.
\end{equation}

\vspace{1ex}

\begin{lem}
\label{L-LEMDEC}
One can find two sequences of real numbers $(\alpha_k^{T} )$ and $(\beta_k^{T} )$ both in $\ell^2(\dN)$
such that
\begin{equation}
\label{DECZTAB}
\cZ_T(a,b) = \dE[\cZ_T(a,b)] + \sum_{k=1}^{\infty} \alpha_k ^{T}\left(\varepsilon_k^2-1\right) +\beta_k ^{T}\varepsilon_k 
\end{equation}
where $(\varepsilon_k)$ are independent standard $\cN(0,1)$ random variables. Moreover, for all $(a,b) \in \cD_\cL$, there exist two constants 
$A>0$ and $B>0$ that do not depend on $T$ such that, for $T$ large enough, $\alpha_k^T \in [-A,A]$ for all $k \geq 1$ and 
\begin{equation}
\label{BOUNDBETAT}
\sum_{k=1}^{+\infty} (\beta_k^T)^2 <B.
\end{equation}
Consequently, for all $(a,b) \in \cD_\cL$ and $x\in \dR$ such that $\abs{x} <1/2A$, and for $T$ large enough, we have
\begin{equation*} 
\cL_T(xa,xb)=\frac{1}{T} \dE[\cZ_T(xa,xb)] -\frac{1}{2T} \sum_{k=1}^{\infty}\Bigl(\log(1-2x\alpha_k ^{T})+2x\alpha_k^T\Bigr)
+\frac{1}{2T} \sum_{k=1}^{\infty}\frac{(x\beta_k ^{T})^2}{1-2x\alpha_k ^{T}}.
\end{equation*}
Finally, if $b \neq 0$, the empirical spectral measure
\begin{equation} 
\label{L-DEFNUT}
\nu_T=\frac{1}{T} \sum_{k=1}^{\infty} \delta_{\alpha_k ^{T}}
\end{equation}
satisfies, for any continuous $f\in \cF$ with compact support
\begin{equation} 
\label{L-LIMNUT}
\lim_{T \rightarrow \infty} \langle \nu_T,f \rangle
= \lim_{T \rightarrow \infty}
\frac{1}{T} \sum_{k=1}^{\infty} f(\alpha_k ^{T})
= \langle \nu ,f \rangle
= \frac{1}{2\pi} \int_{\mathbb{R}} f(bg(x))dx.
\end{equation}
\end{lem}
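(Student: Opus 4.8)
The plan is to exploit the Gaussian structure of $(X_t)$ to diagonalize the quadratic functional $\cZ_T(a,b)$. First I would write $X_t = Y_t + m_t$ with $Y_t$ the centered Ornstein--Uhlenbeck process, so that $\cZ_T(a,b)$ splits into a centered quadratic form in $(Y_t)$, a linear term, and a deterministic constant. Since $Y = (Y_t)_{0 \le t \le T}$ is a centered Gaussian process on $L^2[0,T]$, the Karhunen--Lo\`eve theorem gives an orthonormal basis of eigenfunctions $(e_k)$ of its covariance operator with eigenvalues $(\lambda_k^T)$, and $Y_t = \sum_k \sqrt{\lambda_k^T}\, \varepsilon_k e_k(t)$ with $(\varepsilon_k)$ i.i.d. $\cN(0,1)$. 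Substituting this expansion into \eqref{DEFZTAB}, the quadratic part becomes a bilinear form in the $\varepsilon_k$; a further (finite-dimensional, then limiting) orthogonal change of variables on the Gaussian vector diagonalizes it, producing coefficients $\alpha_k^T$ (from the It\^o-type quadratic part, which is why $\varepsilon_k^2 - 1$ rather than $\varepsilon_k^2$ appears — one must center the stochastic integral $\int_0^T (X_t - \overline X_T)\,dX_t$) together with the linear coefficients $\beta_k^T$ carrying the shift contribution $m_t$ and the recentering by $\overline X_T$. This yields \eqref{DECZTAB}.

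Next I would establish the uniform bounds. The key point is that $\cL_T(a,b)$ was already computed in Lemma \ref{L-LEMFOND2} and is finite precisely on $\cD_\cL$; on the other hand, from \eqref{DECZTAB} the Laplace transform of $\cZ_T$ factorizes over $k$, giving
$$
\cL_T(a,b) = \frac{1}{T}\dE[\cZ_T(a,b)] - \frac{1}{2T}\sum_{k\ge 1}\bigl(\log(1-2\alpha_k^T) + 2\alpha_k^T\bigr) + \frac{1}{2T}\sum_{k\ge 1}\frac{(\beta_k^T)^2}{1-2\alpha_k^T},
$$
valid whenever $\sup_k \alpha_k^T < 1/2$. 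The eigenvalues $\lambda_k^T$ of the Ornstein--Uhlenbeck covariance are, up to lower-order boundary corrections, given by the classical formula $\lambda_k^T \approx g(x_k)$ for frequencies $x_k$ spaced like $\pi/T$, where $g$ is the spectral density \eqref{SPECDENS}; hence $\alpha_k^T = b\lambda_k^T + \text{(corrections)}$ stays in a fixed compact interval $[-A,A]$, with $A$ depending only on $(a,b)$ through $\sup_x |b| g(x) = |b|/\theta^2$ and the range of the other contribution, uniformly in $T$. For the $\beta_k^T$, I would use that $\sum_k (\beta_k^T)^2$ equals (a multiple of) the $L^2[0,T]$ norm of the deterministic driving function built from $m_t$ and the projection removing $\overline X_T$, composed with the covariance; this norm is bounded because $m_t$ converges exponentially to $-\gamma/\theta$ and the relevant operator norms are uniformly controlled on $\cD_\cL$. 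Replacing $(a,b)$ by $(xa,xb)$ and noting $\alpha_k^T(xa,xb) = x\,\alpha_k^T(a,b)$ (the quadratic coefficients scale linearly in $b$, hence in $x$), the condition $|x| < 1/2A$ guarantees $|2x\alpha_k^T| < 1$, which legitimizes the series expansion and gives the displayed formula for $\cL_T(xa,xb)$.

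Finally, for the convergence \eqref{L-LIMNUT} of the empirical spectral measure $\nu_T$, I would invoke the classical Szeg\H{o}-type / Kac--Murdock--Szeg\H{o} asymptotics for the eigenvalues of the truncated covariance operator of a stationary process: the eigenvalues of the integral operator with kernel the Ornstein--Uhlenbeck covariance $r(s-t) = e^{-|\theta||s-t|}/(2|\theta|)$ on $[0,T]$, after the It\^o correction producing the $\alpha_k^T$, equidistribute (after scaling the counting measure by $1/T$) according to the pushforward of normalized Lebesgue measure on frequency space under $x \mapsto b\, g(x)$, which is exactly $\tfrac{1}{2\pi}\int_\dR f(bg(x))\,dx$. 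Restricting to $f \in \cF$ of the form $f(x) = x^2 h(x)$ with compact support makes the integral $\tfrac{1}{2\pi}\int b^2 g(x)^2 h(bg(x))\,dx$ absolutely convergent and, crucially, kills the contribution of the many near-zero eigenvalues (where the crude equidistribution statement is most delicate), so only finitely-effective spectral mass matters. The hypothesis $b \neq 0$ is what makes $\nu$ a genuine (non-degenerate) measure rather than a point mass at $0$.

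The main obstacle I expect is the second step: extracting \emph{uniform in $T$} bounds on the $\alpha_k^T$ and on $\sum_k (\beta_k^T)^2$. The eigenfunctions $e_k$ and eigenvalues $\lambda_k^T$ of the finite-horizon Ornstein--Uhlenbeck operator are not given by a clean closed form — they satisfy a transcendental equation with boundary terms — so one cannot simply quote the stationary spectral density; the boundary corrections must be shown to be $O(1/T)$ uniformly, and the recentering by $\overline X_T$ (a rank-one perturbation) must be checked not to destroy the interlacing bounds. In practice the cleanest route is to bypass the eigenfunctions entirely and read off everything from the already-known, explicit rational-in-$(a,b,T,e^{-\vp(b)T})$ formula for $\cL_T$ from Lemma \ref{L-LEMFOND2}: matching it term-by-term with the product formula above identifies $\sum_k \log(1-2\alpha_k^T)$, $\sum_k \alpha_k^T$, and $\sum_k (\beta_k^T)^2/(1-2\alpha_k^T)$ as explicit analytic functions of $(a,b)$, bounded on compacts of $\cD_\cL$, from which the stated uniform bounds follow; the spectral-measure statement \eqref{L-LIMNUT} is then the only place the Karhunen--Lo\`eve eigenvalues genuinely enter, and there one really does need the Kac--Murdock--Szeg\H{o} equidistribution input.
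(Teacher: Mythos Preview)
Your plan is structurally sound but takes a substantially harder route than the paper, and it contains one real misconception at the end.

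For the decomposition \eqref{DECZTAB} the paper does not perform an explicit Karhunen--Lo\`eve expansion followed by a second orthogonal diagonalization. Instead it writes $X_t=Y_t+m_t$, uses It\^o's formula to express $\cZ_T(a,b)$ as a polynomial of degree two in the Gaussian process $(Y_t)$, and then splits it as $\cZ_T^0+\cZ_T^1+\cZ_T^2$ with $\cZ_T^j\in H^{:j:}$ (homogeneous Wiener chaos of order $j$). Theorem~6.2 of Janson's \emph{Gaussian Hilbert Spaces} then delivers \eqref{DECZTAB} and the product formula for $\cL_T(xa,xb)$ in one stroke, with no eigenfunctions ever written down. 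Your two-step diagonalization would of course reproduce this, but the abstract chaos decomposition is considerably cleaner.

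The uniform bounds are also obtained more slickly than you propose. For $\sum_k(\beta_k^T)^2$, Janson's theorem gives the identity $\sum_k(\beta_k^T)^2=\dE\bigl[(\cZ_T^1(a,b))^2\bigr]$, and the right-hand side is bounded by elementary second-moment estimates on $Y_T$, $\overline{Y}_T$ and $\int_0^T(m_t-\mu_T)Y_t\,dt$. For $\sup_k|\alpha_k^T|$, the paper does \emph{not} analyze eigenvalues or invoke interlacing under finite-rank perturbations: since $\cD_\cL$ is open and contains the origin, there is $\varepsilon>0$ with $(xa,xb)\in\cD_\cL$ for all $|x|<\varepsilon$; Lemma~\ref{L-LEMFOND2} says $\dE[\exp(x\cZ_T(a,b))]$ is finite there, and Janson's Theorem~6.2 then forces the characteristic function of $\cZ_T(a,b)$ to be analytic on the strip $\{|\mathrm{Im}\,z|<(2\max_k|\alpha_k^T|)^{-1}\}$, whence $\max_k|\alpha_k^T|\le 1/(2\varepsilon)=:A$. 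This is exactly the ``read it off $\cL_T$'' fallback you mention, but applied to the $\alpha$-bound rather than the spectral measure.

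Your genuine misconception is the last sentence: you assert that for \eqref{L-LIMNUT} ``one really does need the Kac--Murdock--Szeg\H{o} equidistribution input.'' The paper avoids it entirely. Once the product formula for $\cL_T(xa,xb)$ is established, one knows from Lemma~\ref{L-LEMFOND2} that $\cL_T(xa,xb)\to\cL(xa,xb)$, one checks directly that $T^{-1}\dE[\cZ_T(xa,xb)]\to -\tfrac{x}{2}(a+b/\theta)$, and the $\beta$-sum term is $O(1/T)$ by \eqref{BOUNDBETAT}. Subtracting, the limit of $\frac{1}{2T}\sum_k\bigl(\log(1-2x\alpha_k^T)+2x\alpha_k^T\bigr)$ is an explicit function of $x$ which one verifies equals $\frac{1}{2\pi}\int_\dR f_x(bg(y))\,dy$ with $f_x(y)=\tfrac12(\log(1-2xy)+2xy)$. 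Taylor-expanding both sides in $x$ (legitimate by analyticity on $|x|<1/2A$) yields $\lim_T\frac{1}{T}\sum_k(\alpha_k^T)^p=\frac{1}{2\pi}\int_\dR(bg(y))^p\,dy$ for every $p\ge 2$, and Stone--Weierstrass on compacts finishes \eqref{L-LIMNUT} for $f\in\cF$. No spectral asymptotics of the finite-horizon covariance operator are invoked; the equidistribution is a \emph{consequence} of the cumulant-generating-function computation, not an input to it.
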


\begin{proof}
The proof is given in Appendix C.
\end{proof}


\section{Proofs of the large deviations results.}
\setcounter{equation}{0}
\label{S-PLDP}


First of all, 
\begin{equation*}
\wh{V}_T=
\begin{pmatrix}
 \wh{\theta}_T \\
\wh{\gamma}_T
\end{pmatrix}
\hspace{1cm}\text{and}\hspace{1cm}
\wt{V}_T=
\begin{pmatrix}
 \wt{\theta}_T \\
\wt{\gamma}_T
\end{pmatrix}
\end{equation*}
where
\begin{equation}
\label{MLEQUE}
\wt{\theta}_T=\frac{\int_0^T X_t dX_t}{S_T} 
\hspace{1cm}\text{and}\hspace{1cm}
\wt{\gamma}_T= - \wt{\theta}_T \overline{X}_T.
\end{equation}
The following lemma shows that the sequences $(\wh{V}_T)$  and $(\wt{V}_T)$ share the same LDP. We refer the reader to
\cite{DZ98} for the classical notion of exponential approximation.

\begin{lem}
 \label{L-EXPEQUE}
The sequences of random vectors $(\wh{V}_T)$  and $(\wt{V}_T)$ are exponentially equivalent, that is to say, for all $\varepsilon >0$,
\begin{equation}
\lim_{T \rightarrow +\infty} \frac{1}{T} \log \dP\left( \parallel \wh{V}_T-\wt{V}_T \parallel > \varepsilon \right)=-\infty.
\end{equation}
In particular, if $(\wt{V}_T)$ satisfies an LDP with good rate function $I$, then  the same LDP holds for 
$(\wh{V}_T)$. 
\end{lem}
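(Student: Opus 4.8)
The plan is to show exponential equivalence by controlling the difference $\wh V_T - \wt V_T$ in terms of quantities that are easily handled at the large deviations scale. First I would compare $\wh\theta_T$ with $\wt\theta_T$. From \eqref{MLET} and \eqref{EQMLET} one has
\[
\wh\theta_T = \frac{\int_0^T (X_t - \overline X_T)\,dX_t}{S_T}, \qquad
\wt\theta_T = \frac{\int_0^T X_t\,dX_t}{S_T},
\]
so that $\wh\theta_T - \wt\theta_T = -\overline X_T (X_T)/S_T$ up to a term involving $\overline X_T \int_0^T dX_t = \overline X_T X_T$; more precisely $\int_0^T(X_t-\overline X_T)\,dX_t = \int_0^T X_t\,dX_t - \overline X_T X_T$, hence
\[
\wh\theta_T - \wt\theta_T = -\frac{\overline X_T X_T}{S_T}.
\]
Similarly, from \eqref{MLEG} and \eqref{MLEQUE} one checks that $\wh\gamma_T + \wh\theta_T\,\overline X_T = X_T/T$ (this is just the first normal equation), so $\wh\gamma_T - \wt\gamma_T = X_T/T - (\wh\theta_T - \wt\theta_T)\overline X_T = X_T/T + \overline X_T^2 X_T/S_T$. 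Thus both components of $\wh V_T - \wt V_T$ are built from the three random variables $X_T/\sqrt T$, $\overline X_T$, and $T/S_T$, together with $X_T/T \to 0$.

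Next I would reduce the event $\{\|\wh V_T - \wt V_T\| > \varepsilon\}$ to a union of events on which one of these building blocks is large. Since $X_T \sim \cN(m_T,a_T(\theta))$ and $\overline X_T \sim \cN(\mu_T,c_T(\theta))$ with $m_T,\mu_T$ bounded and $a_T(\theta),c_T(\theta)$ bounded (because $\theta<0$), standard Gaussian tail bounds give, for every $\rho>0$,
\[
\limsup_{T\to\infty}\frac1T\log\dP\bigl(|X_T|>\rho\sqrt T\bigr) = -\infty,
\qquad
\limsup_{T\to\infty}\frac1T\log\dP\bigl(|\overline X_T|>\rho\bigr) < 0 ,
\]
and in fact the first of these is $-\infty$ because $\mathrm{Var}(X_T/\sqrt T)\to 0$. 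The remaining ingredient is a lower bound on $S_T/T$: I need $\limsup_T \frac1T\log\dP(S_T \le \delta T) < 0$ for some fixed small $\delta>0$, or at least $\to -\infty$ along the relevant regime. This follows from Lemma \ref{L-LEMFOND2} (or directly from the Karhunen–Loève representation of Lemma \ref{L-LEMDEC}): $\cL_T(0,b) \to \cL(0,b) = -\tfrac12(\theta+\sqrt{\theta^2-2b})$ for $b<\theta^2/2$, so by Chebyshev's exponential inequality with a negative $b$, $\dP(S_T\le \delta T) \le \exp(bT\delta)\,\dE[\exp(bS_T)] = \exp(T(b\delta + \cL_T(0,b)))$, and choosing $\delta$ small makes the exponent negative for suitable $b<0$. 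Then $\{T/S_T > M\}$ has exponentially negligible probability for $M$ large.

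Combining: on the complement of the (exponentially negligible) bad events one has $|X_T/\sqrt T|\le \rho$, $|\overline X_T|\le \rho$, $T/S_T \le M$, whence
\[
\|\wh V_T - \wt V_T\| \;\le\; \frac{|\overline X_T|\,|X_T|}{S_T}\bigl(1+|\overline X_T|\bigr) + \frac{|X_T|}{T}
\;\le\; \rho^2 \sqrt T\, \frac{M}{T}(1+\rho) + \frac{\rho\sqrt T}{T},
\]
which tends to $0$; choosing $\rho$ small and $T$ large makes this $<\varepsilon$, so the event $\{\|\wh V_T-\wt V_T\|>\varepsilon\}$ is contained in the union of the bad events, each of which has $\frac1T\log\dP(\cdot)\to-\infty$ (or can be driven there by sending $\rho\to0$, $M\to\infty$ after taking the limsup). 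Hence the exponential equivalence holds, and the last sentence of the lemma is then the standard fact \cite{DZ98} that exponentially equivalent families satisfy the same LDP. The main obstacle is the lower deviation bound for $S_T$, i.e.\ ruling out $S_T$ being atypically small at exponential speed; everything else is routine Gaussian estimation. I expect this to be dispatched cleanly via the explicit limiting cumulant generating function $\cL$ from Lemma \ref{L-LEMFOND2}.
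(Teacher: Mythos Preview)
Your approach is essentially the paper's: the paper computes the same explicit differences $\wh\theta_T-\wt\theta_T=-X_T\overline X_T/S_T$ and $\wh\gamma_T-\wt\gamma_T=(X_T/T)(1-(\overline X_T)^2/\Sigma_T)$ (up to sign), restricts to the event $\{|\overline X_T|\le\xi,\ \Sigma_T\ge\xi^{-1}\}$, and bounds the three resulting tail probabilities using the Gaussian laws of $X_T$ and $\overline X_T$ together with the LDP for $\Sigma_T=S_T/T$ derived from Lemma~\ref{L-LEMFOND2} via G\"artner--Ellis (your Chebyshev argument with $b<0$ is the same computation).

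There is one slip to correct. To drive the $\limsup$ to $-\infty$ you must let the threshold for $|\overline X_T|$ tend to $+\infty$, not $\rho\to 0$ as you write: sending $\rho\to 0$ makes $\dP(|\overline X_T|>\rho)\to 1$, not exponentially small. The paper handles this cleanly with a single parameter $\xi\to\infty$, observing that on the good event $\|\wh V_T-\wt V_T\|\le \sqrt{3}\,\xi^3\,|X_T|/T$, and that $\tfrac1T\log\dP(|X_T|/T\ge c)\to-\infty$ for \emph{every} $c>0$, so the growing prefactor $\xi^3$ costs nothing in that term. With this direction fixed (take your $\rho$ for $\overline X_T$ large, your $M$ large, and then $T$ large), your argument goes through.
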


\begin{proof}
It is easy to see from the very definition of our estimates
given in \eqref{MLET}, \eqref{MLEG} and \eqref{MLEQUE} that
\begin{equation*}
\wh{\theta}_T-\wt{\theta}_T=\frac{X_T}{T}\left(\frac{\overline{X}_T}{\Sigma_T}\right)
\hspace{1cm}\text{and}\hspace{1cm}
\wh{\gamma}_T-\wt{\gamma}_T=\frac{X_T}{T}\left(1-\frac{(\overline{X}_T)^2}{\Sigma_T}\right)
\end{equation*}
where
$\Sigma_T=S_T/T$.
On the event $\{ | \overline{X}_T | \leq \xi, \ \Sigma_T \geq \xi^{-1} \}$ with $\xi >1$, we have
$$
\parallel \wh{V}_T-\wt{V}_T \parallel  \leq \sqrt{3} \xi^3 \frac{[X_T|}{T}.
$$
Hence, for all $\varepsilon>0$,
\begin{equation}
\label{EQUI1}
\dP\left( \parallel \wh{V}_T-\wt{V}_T \parallel > \varepsilon \right) \leq 
\dP\Bigl( \frac{[X_T|}{T} \geq  \frac{\varepsilon \xi^{-3}}{\sqrt{3}} \Bigr) +
\dP(  | \overline{X}_T | \geq \xi ) + \dP( \Sigma_T \leq \xi^{-1} ).
\end{equation}
On the one hand, it is not hard to see that for all $c>0$,
\begin{equation}
\label{EQUI2}
\lim_{T \rightarrow \infty}\frac{1}{T}\log \dP \left( \frac{|X_T|}{T}  \geq c \right) = -\infty.
\end{equation}
As a matter of fact, we recall that $X_T$ is a Gaussian $\cN(m_T,a_T(\theta))$ random variable.
Consequently, for all $c>0$,
$$
\lim_{T \rightarrow \infty}\frac{1}{T^2}\log\dP \Bigl( \abs{X_T} \geq cT \Bigr)=\theta c^2
$$
which immediately leads to \eqref{EQUI2}, as $\theta <0$. By the same token, we already saw at the beginning of Section
\ref{S-KSL}
that $\overline{X}_T$ is a Gaussian $\cN(\mu_T,c_T(\theta))$ random variable.
It implies that for all $c>0$ such that $ c>|\gamma|/|\theta|$,
\begin{equation}
\label{ineg1}
\limsup_{T \rightarrow + \infty} \frac{1}{T}\log \dP \Bigl( \abs{\overline{X}_T} \geq c \Bigr) \leq 
-\frac{\theta^2}{2}  \left(  c+\abs{\frac{\gamma}{\theta}}\right)^2.
\end{equation}
On the other hand, we immediately deduce from Lemma \ref{L-LEMFOND2} together with
G\"artner-Ellis's theorem that the sequence $(\Sigma_T)$ satisfies an LDP with speed $T$ and good rate function
\begin{equation*}
I(c)= \left\{ \begin{array}{ccc}
{\displaystyle \frac{(2\theta c+1)^2}{8c}} & \textrm{if} & {\displaystyle c>0},\vspace{1ex}\\
 +\infty & \textrm{if} & {\displaystyle c\leq 0}.           
            \end{array} \right.
\end{equation*}
Therefore, for all $c>0$ such that $-2\theta c <1$,
\begin{equation}
\label{EQUI3}
\lim_{T \rightarrow \infty}\frac{1}{T}\log \dP \Bigl( \Sigma_T \leq c \Bigr) = - \frac{(2\theta c+1)^2}{8c}.
\end{equation}
Finally, it follows from the conjunction of \eqref{EQUI1}, \eqref{EQUI2}, \eqref{ineg1}, and \eqref{EQUI3} that, for all
$\veps >0$ and for $\xi >1$ large enough,
\begin{equation}
\label{EQUI4}
\limsup_{T \rightarrow + \infty} \frac{1}{T}\log \dP \left( \parallel \wh{V}_T-\wt{V}_T \parallel > \varepsilon \right) 
\leq - M_{\theta,\gamma}(\veps,\xi)
\end{equation}
where
\begin{equation*}
M_{\theta,\gamma}(\veps,\xi)  =  \min \left( \frac{\theta^2}{2} \Bigl( \xi + \abs{\frac{\gamma}{\theta}}\Bigr)^2\!\!,
 \frac{\xi}{8} \Bigl( \frac{2\theta}{\xi}  + 1\Bigr)^2 \right).
\end{equation*}
One can observe that if $\xi$ goes to infinity, $M_{\theta,\gamma}(\veps,\xi)$ tend to infinity, which is
exactly what we wanted to prove.
\end{proof}

We are now in the position to prove our LDP results. Our strategy is to establish an LDP for the triplet
\begin{equation*}
\left( \frac{X_T}{\sqrt{T}}, \frac{1}{T}\int_0^T X_t^2dt, \frac{1}{T} \int_0^T X_t dt\right)
\end{equation*}
and then to make use of the contraction principle \cite{DZ98} in order to prove Theorem \ref{T-LDPG}
via Lemma \ref{L-EXPEQUE}. The limiting cumulant generating function $\Lambda$ of the above triplet was
already calculated in Lemma \ref{L-LEMFOND1}. It is not difficult to check that the function $\Lambda$ is steep 
on its effective domain $\cD_\Lambda$. Consequently, we deduce from G\"artner-Ellis's theorem that the
above triplet satisfies an LDP with good rate function $I$ given by the Fenchel-Legendre transform of $\Lambda$,
$$I(\lambda,\mu,\delta)= \sup_{(a,b,c) \in \cD_\Lambda} \Bigl\{\lambda a +\mu b +\delta c - \Lambda(a,b,c) \Bigr\}.$$
We can prove after some straightforward calculations that
\begin{equation}
\label{RATETRIP}
 I(\lambda,\mu,\delta)= \left\{ \begin{array}{ccc}
{\displaystyle  \frac{\theta^2\mu- \theta \lambda^2}{2} + \frac{\theta+\gamma^2+2 \theta \gamma \delta}{2}
+\frac{(1+\lambda^2)^2}{8(\mu-\delta^2)}} & \textrm{if} & {\displaystyle \delta^2<\mu},\vspace{1ex}\\
             +\infty & \textrm{if} & \delta^2 \geq \mu.
            \end{array} \right.
\end{equation}
Hereafter, it follows from the well-known It\^o's formula that
\begin{equation}
\label{ITO}
\int_0^{T} X_t dX_t= \frac{1}{2} \Bigl( X_{T}^2-T\Bigr)
\end{equation}
which implies that
$$\wt{V}_T=f\left(\frac{X_T}{\sqrt{T}},\frac{1}{T}\int_0^T X_t^2dt,\frac{1}{T}\int_0^T X_t dt \right)$$
where $f$ is the continuous function given, for all $(\lambda,\mu,\delta) \in \dR^3$ such that $\mu > \delta^2$, by
\begin{equation*}
 f(\lambda,\mu,\delta)=
\begin{pmatrix}
{\displaystyle \frac{\lambda^2-1}{2(\mu-\delta^2)}} \\
{\displaystyle-\frac{\delta(\lambda^2-1)}{2(\mu-\delta^2)}}
\end{pmatrix}
.
\end{equation*} 
Therefore, we infer from the contraction principle given e.g. by Theorem 4.2.1 of \cite{DZ98}, together  with Lemma \ref{L-EXPEQUE},
that the sequences of random vectors $(\wh{V}_T)$ and $(\wt{V}_T)$ share the same LDP with good rate function 
\begin{equation*}
 I_{(\theta,\gamma)}(c,d)=\inf \Bigl\{ I(\lambda,\mu,\delta)  \ \slash \ (\lambda,\mu,\delta) \in \mathbb{R}^3, \mu>\delta^2, 
 f(\lambda,\mu,\delta)= \begin{pmatrix} c\\d \end{pmatrix}
 \Bigr\},
\end{equation*}
where the infimum over the empty set is equal to $+\infty$.  
Finally, we obtain the rate function given by \eqref{RATEG} thanks to elementary calculations, which completes the proof of Theorem \ref{T-LDPG}.
\demend


\section{Proofs of the sharp large deviations results.}
\setcounter{equation}{0}
\label{S-PSLDP}


\subsection{Proof of Theorem \ref{T-SLDPT}, first part}
First of all, it follows from straightforward calculations that the effective domain
$\cD_\cL$ given in Lemma \ref{L-LEMFOND2} can be rewritten as
\begin{equation*}
\cD_\cL= \left\{ \begin{array}{ccc}
           {\displaystyle  \Bigl]-\infty,\frac{-\theta^2}{2c} \Bigr[ }& \textrm{if} &  {\displaystyle   \theta < c\leq \frac{\theta}{2}},\\
              {\displaystyle  \Bigl]-\infty,2(c -\theta) \Bigr[ }& \textrm{if} & {\displaystyle\frac{\theta}{2} < c \leq 0},\\
             {\displaystyle  \Bigl]\frac{-\theta^2}{2c}, 2(c -\theta) \Bigr[ } & \textrm{if} & c>0.
            \end{array} \right.
\end{equation*}
In addition, for all $a \in \cD_\cL$, let $\vp(a)=\sqrt{\theta^2+2ac}$ and denote
\begin{eqnarray}
 L(a)&=& \cL(a,-ac)   = -\frac{1}{2} \left(a + \theta + \sqrt{\theta^2+2ac} \right),\label{E-DEFL2}\\
 H(a)&=& \cH(a,-ac)   = -\frac{1}{2} \log \left( \frac{(\vp(a)-a-\theta) \theta^2}{2\vp^3(a)} \right)-\frac{\gamma^2(a + \theta + \vp(a))}{2\theta^2}\label{E-DEFH2}.
\end{eqnarray}
The function $L$ is not steep as the derivative of $L$ is finite at the boundary of $\cD_\cL$. Moreover, $L^\prime(a)=0$ if and only if 
$a=a_c$ with $a_c$ given by \eqref{DEFACSIGC1}.
Finally, one can observe that $a_c\in \cD_\cL$ only if $c<\theta/3$. We shall focus our attention on the SLDP 
in the easy case $\theta < c < \theta/3$. Denote by $L_T$ the normalized 
cumulant generating function of the random variable $Z_T(a)=\cZ_T(a,-ca)$. 
We can split $\dP(\wh{\theta}_T \geq c)$ into two terms, $\dP(\wh{\theta}_T \geq c)=A_TB_T$ with
\begin{eqnarray}
\label{DEFAT} A_T &=& \exp (TL_T(a_c)),\\
\label{DEFBT} B_T &=& \dE_T\left[\exp (- Z_T(a_c))\rI_{Z_T(a_c) \geq 0}\right],
\end{eqnarray}
where $\dE_T$ stands for the expectation after the usual change of probability
\begin{equation}
\label{DEFET}
 \frac{\dd \dP_T}{\dd \dP} = \exp \left( Z_T(a_c) - TL_T(a_c)\right).
\end{equation}
On the one hand, we can deduce from Lemma \ref{L-LEMFOND2} that
\begin{equation}
\label{DEVAT}
 A_T = \exp( TL(a_c)+H(a_c))\Bigl(1+o(1)\Bigr)=\exp( -TI_{\theta}(c)+J(c))\Bigl(1+o(1)\Bigr).
\end{equation}
It remains to establish an asymptotic expansion for $B_T$ which can be rewritten as
\begin{equation}
\label{NBT}
 B_T=\mathbb{E}_T \Bigl[\exp (-a_c\sigma_c\sqrt{T} U_T)\rI_{U_T \geq 0}\Bigr]
 \hspace{0.5cm} \text{where} \hspace{0.5cm}
 U_T=\frac{Z_T(1)}{\sigma_c \sqrt{T}}.
\end{equation}

\begin{lem}
\label{L-BT}
 For all $\theta < c < \theta/3$, we have 
\begin{equation}
\label{DEVBT}
 B_T=\frac{1}{a_c \sigma_c \sqrt{2\pi T}}\Bigl(1+o(1)\Bigr).
\end{equation}
\end{lem}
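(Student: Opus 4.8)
The plan is to analyze the asymptotic behavior of $B_T$ defined in \eqref{NBT} via a sharp version of the argument that underlies Bahadur--Rao type expansions. After the change of probability \eqref{DEFET}, the centered random variable $U_T = Z_T(1)/(\sigma_c\sqrt{T})$ should be asymptotically standard Gaussian under $\dP_T$, with the variance normalization $\sigma_c^2 = -1/(2c)$ coming from $L^{\prime\prime}(a_c)$. The key identity to exploit is that under $\dP_T$ the cumulant generating function of $Z_T(1)$ is, up to recentering, $T(L_T(a_c + s) - L_T(a_c))$, and by Lemma~\ref{L-LEMFOND2} this converges (with explicit $1/T$ corrections) to $T(L(a_c+s) - L(a_c))$, whose second-order Taylor expansion around $s=0$ is $\frac{T}{2}L^{\prime\prime}(a_c)s^2 + O(Ts^3)$. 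Thus $a_c\sigma_c\sqrt{T}\,U_T$ has, under $\dP_T$, Laplace transform controlled near the origin, and a local central limit / Laplace-type analysis gives $\dE_T[\exp(-a_c\sigma_c\sqrt T\,U_T)\rI_{U_T\ge 0}] \sim \int_0^\infty e^{-a_c\sigma_c\sqrt T\,u}\,\phi(u)\,du \sim 1/(a_c\sigma_c\sqrt{2\pi T})$, the last step being the standard computation $\int_0^\infty e^{-\lambda u}\frac{1}{\sqrt{2\pi}}e^{-u^2/2}\,du \sim 1/(\lambda\sqrt{2\pi})$ as $\lambda\to\infty$.

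The steps I would carry out, in order, are as follows. First, I would record that $a_c>0$ and $\sigma_c^2>0$ for $\theta<c<\theta/3$, so that the exponential weight $e^{-a_c\sigma_c\sqrt T U_T}$ genuinely damps the positive values of $U_T$; this is what makes the integral concentrate near $U_T=0^+$ on a scale $1/\sqrt T$. Second, I would use Lemma~\ref{L-LEMFOND2} (specifically the fact that $\cR_T$ extends to a bounded analytic function on a complex neighborhood of the interior of $\cD_\cL$) to obtain, for $s$ in a fixed complex neighborhood of $0$, a uniform expansion $T\bigl(L_T(a_c+s) - L_T(a_c)\bigr) - s\,\dE_T[Z_T(1)] = \frac{T}{2}L^{\prime\prime}(a_c)s^2(1+o(1)) + \text{(lower order)}$, which simultaneously shows $\dE_T[Z_T(1)]/(\sigma_c\sqrt T) \to 0$ and identifies the limiting variance of $U_T$ as $1$. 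Third, I would invoke a complex-analytic (Fourier/Laplace inversion) argument — essentially the one used in the Bahadur--Rao theorem and its refinements, cf. \cite{BR01} — writing $B_T$ as a contour integral $\frac{1}{2i\pi}\int e^{T\Phi_T(z)}\,dz$ and performing a steepest-descent / saddle-point evaluation, the saddle being pushed to the boundary constraint $U_T\ge 0$ rather than to an interior critical point, which is precisely why the answer carries a $1/\sqrt T$ rather than the usual $1/\sqrt T$ from a Gaussian tail being replaced by a boundary term of order $1/(\sqrt T)$ with the explicit constant $1/(a_c\sigma_c\sqrt{2\pi})$.

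Alternatively — and this may be cleaner to write — one can bypass contour integration by using Lemma~\ref{L-LEMDEC}: under $\dP_T$, $Z_T(1)$ decomposes as a constant plus $\sum_k \alpha_k^T(\veps_k^2-1) + \beta_k^T\veps_k$ with $\alpha_k^T$ uniformly bounded and $\sum_k(\beta_k^T)^2$ bounded, so one has an explicit handle on the characteristic function of $U_T$ and can apply a quantitative local CLT for such quadratic-plus-linear Gaussian chaos. Then $B_T = \dE_T[e^{-a_c\sigma_c\sqrt T U_T}\rI_{U_T\ge0}]$ is estimated by splitting $\{U_T\ge 0\}$ into $\{0\le U_T \le T^{-1/2+\delta}\}$, where the local CLT gives the density $\approx \phi(0)$ and yields the main term $\phi(0)\int_0^\infty e^{-a_c\sigma_c\sqrt T u}\,du = 1/(a_c\sigma_c\sqrt{2\pi T})$, and $\{U_T > T^{-1/2+\delta}\}$, where the exponential factor $e^{-a_c\sigma_c T^{\delta}}$ is super-polynomially small and hence negligible. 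The main obstacle, in either approach, is making the local limit theorem sufficiently uniform near zero: one needs control of the characteristic function $\dE_T[e^{itU_T}]$ not just pointwise but integrably in $t$ over the whole line (an anti-concentration / non-lattice estimate), uniformly in $T$, which is where the boundedness of $\alpha_k^T$ and the summability \eqref{BOUNDBETAT} from Lemma~\ref{L-LEMDEC} — together with the analyticity of $\cR_T$ from Lemma~\ref{L-LEMFOND2} for the tail of $t$ — have to be combined carefully; the rest is bookkeeping.
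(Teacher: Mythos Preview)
Your proposal is correct in outline and identifies the right ingredients (the sign conditions on $a_c,\sigma_c$, Lemma~\ref{L-LEMFOND2} for the cumulant expansion, Lemma~\ref{L-LEMDEC} for characteristic-function control), but the paper's execution is more direct than either of your two routes. Rather than proving a local CLT for $U_T$ and then integrating against $e^{-a_c\sigma_c\sqrt{T}u}$, or setting up a saddle-point contour, the paper applies Parseval's formula immediately: since the Fourier transform of $u\mapsto e^{-\lambda u}\rI_{u\ge 0}$ is $(\lambda+it)^{-1}$, one obtains
\[
B_T=\frac{1}{2\pi a_c\sigma_c\sqrt{T}}\int_{\dR}\Bigl(1+\frac{iu}{a_c\sigma_c\sqrt{T}}\Bigr)^{-1}\Phi_T(u)\,du,
\]
where $\Phi_T$ is the characteristic function of $U_T$ under $\dP_T$. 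The pointwise limit $\Phi_T(u)\to e^{-u^2/2}$ follows from Lemma~\ref{L-LEMFOND2} exactly as you say. The integrable dominating function --- precisely the ``main obstacle'' you flag --- comes from the infinite-product representation of $|\Phi_T|^2$ furnished by Lemma~\ref{L-LEMDEC}: using \eqref{L-LIMNUT} one shows that a fraction $\sim\eta T$ of the $\alpha_k^T$ are bounded away from zero, which yields $|\Phi_T(u)|^2\le(1+\eta\xi^2u^2)^{-1}$ uniformly for large $T$, and then Lebesgue dominated convergence gives $2\pi a_c\sigma_c\sqrt{T}\,B_T\to\int_\dR e^{-u^2/2}\,du=\sqrt{2\pi}$. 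This single-step Fourier argument bypasses both the saddle-point heuristics of your first approach and the near/far splitting $\{0\le U_T\le T^{-1/2+\delta}\}$ of your second; what you gain from your formulation is perhaps a clearer probabilistic narrative, but the paper's route is shorter and avoids having to quantify the local CLT error.
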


\begin{proof}
Denote by $\Phi_T$ the characteristic function of $U_T$ under $\mathbb{P}_T$. As $\theta < c < \theta/3$, it follows 
from \eqref{DEFACSIGC1} that $a_c>0$ and $\sigma_c>0$. Moreover, \eqref{DEFET} immediately implies that
\begin{equation}
\label{PHIT}
\Phi_T(u)=\exp \left(T L_T\left(a_c+\frac{iu}{\sigma_c\sqrt{T}}\right)-TL_T(a_c)\right).
\end{equation}
First of all, we deduce from Lemma \ref{L-LEMDEC} that for $T$ large enough, $\Phi_T$ belongs to $L^2(\mathbb{R})$. As a matter of fact,
as soon as $1-2a_c \alpha_k ^{T}>0$ for all $k\geq 1$ and $T$ large enough, we obtain from Lemma \ref{L-LEMDEC} and \eqref{PHIT} that
\begin{eqnarray}
|\Phi_T(u)|^2 &=&\prod_{k=1}^\infty \left( 1 + \frac{4 u^2 (\alpha_k ^{T})^2}{\sigma_c^2 T(1-2a_c \alpha_k ^{T})^2} \right)^{-1/2}
\exp\left(- \frac{ (a_c \beta_k ^{T})^2}{1-2a_c \alpha_k ^{T}} \right) \nonumber, \\
& \leq & \prod_{k=1}^\infty \left( 1 + \frac{4 u^2 (\alpha_k ^{T})^2}{\sigma_c^2 T(1-2a_c \alpha_k ^{T})^2} \right)^{-1/2}.
\label{PHITSQ}
\end{eqnarray}
For all $\veps>0$ small enough such that $1-2a_c \veps >0$ we denote
$$
q_T(\veps)=\sum_{k=1}^\infty \rI_{\left|\alpha_k ^{T}\right|>\veps}.
$$
It follows from Lemma \ref{L-LEMDEC} that it exists some constants $\veps >0$ and $\eta>0$, depending only on $\veps$, such that
\begin{equation}
\label{LIMQT}
\liminf_{T\rightarrow \infty} \frac{q_T(\veps)}{T} \geq 2 \eta.
\end{equation}
Hence, we infer from \eqref{PHITSQ} and \eqref{LIMQT} that for $T$ large enough,
\begin{equation}
|\Phi_T(u)|^2 \leq \left( 1 + \frac{\xi^2 u^2}{T} \right)^{-\eta T}
\hspace{1cm} \text{where} \hspace{1cm}
\xi=\frac{2 \veps}{\sigma_c(1+2 a_c \veps)}
\end{equation}
which clearly ensures, whenever $\eta T \geq 1$, that 
\begin{equation}
\label{MAJPHIT}
|\Phi_T(u)|^2 \leq \frac{1}{1+ \eta \xi^2 u^2}.
\end{equation}
Consequently, we find from \eqref{MAJPHIT} that for $T$ large enough, $\Phi_T$ belongs to $L^2(\mathbb{R})$.
Therefore, we obtain from Parseval's formula that $B_T$, given by \eqref{NBT}, can be rewritten as
\begin{equation}
 \label{NNBT}
 B_T=\frac{1}{2\pi a_c \sigma_c \sqrt{T}} \int_{\mathbb{R}}\left(1+\frac{iu}{a_c\sigma_c\sqrt{T}}\right)^{-1}\Phi_T(u)du.
\end{equation}
However, we deduce from Lemma \ref{L-LEMFOND2} that, for all $u \in \dR$,
\begin{equation}
\lim_{T \rightarrow \infty}  \Phi_T(u) =   \lim_{T \rightarrow \infty} \exp \left(T L_T\left(a_c+\frac{iu}{\sigma_c\sqrt{T}}\right)-TL_T(a_c)\right)=\exp \left( -\frac{u^2}{2} \right)
 \label{CVGPHIT}
\end{equation}
as $L^{\prime \prime}(a_c)=\sigma_c^2$, which means that the distribution of $U_T$ under $\dP_T$ converges to the standard
$\cN(0,1)$ distribution. Finally, \eqref{DEVBT} follows from \eqref{NNBT}, \eqref{CVGPHIT} and the
Lebesgue dominated convergence theorem.
\end{proof}

\noindent
Finally, the expansion \eqref{E-SLDPT1} immediately follows from the conjunction of \eqref{DEVAT} and \eqref{DEVBT}. 
The proof of \eqref{E-SLDPT2} follows  exactly the same lines, the only notable point to mention being a 
change of sign in Parseval's formula.
\demend

\subsection{Proof of Theorem \ref{T-SLDPT}, second part}
We shall now proceed to the proof of the SLDP in the more complex case $c> \theta/3$ with $c\neq 0$. One can easily check that the function $L$,
given by \eqref{E-DEFL2}, is decreasing and reaches its minimum at the right boundary point $a_c=2(c-\theta)$ of the domain
$\cD_\cL$. Therefore, as in \cite{BR01} or \cite{BCS12}, it is necessary to make use of a slight modification of the usual strategy
of change of probability given in \eqref{DEFET}. There exists a unique $a_T$, which belongs to the interior of
$\cD_\cL$ and converges to its border $a_{c}$, solution of the implicit equation
\begin{equation}
\label{EQIMP1}
L^{\prime}(a_T) +\frac{1}{T} H^{\prime}(a_T)=0.
\end{equation}
It leads to the decomposition $\dP(\wh{\theta}_T \geq c)=A_TB_T$ with
\begin{eqnarray}
\label{DEFAT1} A_T &=& \exp (TL_T(a_T)),\\
\label{DEFBT1} B_T &=& \dE_T\left[\exp (-Z_T(a_T))\rI_{Z_T(a_T) \geq 0}\right],
\end{eqnarray}
where $\dE_T$ stands for the expectation after the time-varying change of probability
\begin{equation}
\label{DEFET1}
 \frac{\dd \dP_T}{\dd \dP} = \exp \left(Z_T(a_T) - TL_T(a_T)\right).
\end{equation}
We deduce from \eqref{E-DEFL2}, \eqref{E-DEFH2} together with \eqref{EQIMP1} that
\begin{equation*}
(\theta^2 T + \gamma^2) \tau(a_T)= \frac{\theta^2 ( c(2\varphi(a_T) -a_T)+\theta(\theta - 3c))}{\varphi(a_T)(\varphi(a_T)+c)}
\end{equation*}
where $\varphi(a)=\sqrt{\theta^2+ 2 a c}$ and $\tau(a)= \varphi(a) - a - \theta$. Consequently, it follows from
straightforward calculations that
\begin{eqnarray}
\label{CVGPHIT1}
& &\lim_{T \rightarrow \infty} T  (\varphi(a_T)+2c- \theta)= - \frac{c}{3c-\theta}, \\
\label{CVGAT1}
& &\lim_{T \rightarrow \infty} T  (a_T-a_c)= - \frac{(2c-\theta)}{3c-\theta}, \\
\label{CVGTAUT1}
& &\lim_{T \rightarrow \infty} T  \tau(a_T)= \frac{c-\theta}{3c-\theta}.
\end{eqnarray}
Moreover, we can show via \eqref{E-LOGDETMT} and \eqref{E-LASTLAP} that $R_T(a_T)=\cR_T(a_T,-ca_T)$ remains bounded when $T$
goes to infinity. Hence, Lemma \ref{L-LEMFOND2} together with \eqref{CVGPHIT1}, \eqref{CVGAT1}, and \eqref{CVGTAUT1} imply that
\begin{eqnarray}
\nonumber
A_T & = & \exp( TL(a_T)+H(a_T))\Bigl(1+o(1)\Bigr),\\
 & = & \exp\Bigl(  -TI_{\theta}(c)-\frac{\gamma^2}{\theta^2} (2c-\theta) \Bigr)
\left(\frac{2e T (2c-\theta)^3(3c-\theta)}{\theta^2(c-\theta)} \right)^{\!\!1/2} 
 \!\!\Bigl(1+o(1)\Bigr).
 \label{DEVAT1}
\end{eqnarray}
Moreover, the second term $B_T$ can be rewritten as 
\begin{equation}
\label{NBT1}
 B_T=\dE_T \Bigl[\exp (-a_T T U_T)\rI_{U_T \geq 0}\Bigr]
 \hspace{0.5cm} \text{where} \hspace{0.5cm} U_T=\frac{Z_T(1)}{T}.
\end{equation}

\begin{lem}
\label{L-BT1}
 For $c> \theta/3$ with $c\neq 0$, we have 
\begin{equation}
\label{DEVBT1}
 B_T=\frac{1}{a_c b_c T \sqrt{2 \pi e} }\Bigl(1+o(1)\Bigr)
\end{equation}
where
\begin{equation}
\label{DEFBC}
b_c=-L^\prime(a_c)= \frac{3c- \theta}{2(2c - \theta)}.
\end{equation}
\end{lem}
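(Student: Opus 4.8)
The plan is to adapt the proof of Lemma \ref{L-BT}, working again with the characteristic function of $U_T$ under $\dP_T$, but taking into account that the scaling is now by $T$ rather than by $\sqrt{T}$ and that the limiting characteristic function will fail to be integrable. Since $c>\theta/3$ with $c\neq0$ and $\theta<0$, formulas \eqref{DEFACSIGC2} and \eqref{DEFBC} give $a_c=2(c-\theta)>0$ and $b_c=-L^{\prime}(a_c)=(3c-\theta)/(2(2c-\theta))>0$. Denote by $\Psi_T$ the characteristic function of $U_T$ under $\dP_T$. Since $Z_T(a)=aZ_T(1)$ and, by Lemma \ref{L-LEMFOND2}, $L_T$ extends analytically, the change of probability \eqref{DEFET1} gives $\Psi_T(u)=\exp\bigl(TL_T(a_T+iu/T)-TL_T(a_T)\bigr)$. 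Exactly as in the proof of Lemma \ref{L-BT}, but with $T$ now in place of $\sigma_c\sqrt{T}$ in the denominators, Lemma \ref{L-LEMDEC} shows that $\Psi_T\in L^2(\dR)$ for $T$ large enough, so that Parseval's formula applies to $B_T$ given by \eqref{NBT1} and yields
\begin{equation}
\label{PLAN-PAR}
B_T=\frac{1}{2\pi a_T T}\int_{\dR}\left(1+\frac{iu}{a_T T}\right)^{-1}\Psi_T(u)\,du.
\end{equation}

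First I would identify the pointwise limit of $\Psi_T$. Writing $L_T=L+\tfrac1TH+\tfrac1{T^2}R_T$ with $R_T(a)=\cR_T(a,-ca)$, one has
\begin{equation*}
TL_T\!\left(a_T+\tfrac{iu}{T}\right)-TL_T(a_T)=T\bigl[L(a_T+\tfrac{iu}{T})-L(a_T)\bigr]+\bigl[H(a_T+\tfrac{iu}{T})-H(a_T)\bigr]+\tfrac1T\bigl[R_T(a_T+\tfrac{iu}{T})-R_T(a_T)\bigr].
\end{equation*}
The last bracket tends to $0$ because $\cR_T$ is bounded analytic near $a_c$ and the increment is $O(1/T)$. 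A Taylor expansion, together with $L^{\prime}(a_T)\to L^{\prime}(a_c)=-b_c$, gives $T[L(a_T+\tfrac{iu}{T})-L(a_T)]\to -b_c\,iu$. In the middle bracket only the singular part $-\tfrac12\log\tau(a)$ of $H$ contributes; since $\tau(a_T)$ and $\tau(a_T+iu/T)$ are both of order $1/T$, a short computation based on \eqref{CVGTAUT1} shows that $\tau(a_T+iu/T)/\tau(a_T)\to 1-2b_c\,iu$, whence $H(a_T+\tfrac{iu}{T})-H(a_T)\to -\tfrac12\log(1-2b_c\,iu)$. Altogether, uniformly on compact $u$-sets,
\begin{equation}
\label{PLAN-PSILIM}
\lim_{T\to\infty}\Psi_T(u)=\Psi_\infty(u):=\frac{e^{-i b_c u}}{\sqrt{1-2ib_c u}},
\end{equation}
which is the characteristic function of $W:=b_c(\veps^2-1)$ with $\veps\sim\cN(0,1)$.

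Next I would compute the limit obtained by substituting $\Psi_\infty$ for $\Psi_T$ in \eqref{PLAN-PAR}. Running Parseval's formula backwards,
\begin{equation*}
\frac{1}{2\pi}\int_{\dR}\frac{\Psi_\infty(u)}{a_T T+iu}\,du=\dE\bigl[e^{-a_T T W}\rI_{W\geq0}\bigr]=\frac{2e^{a_T T b_c}}{\sqrt{2\pi}}\int_1^{\infty}e^{-(a_T T b_c+1/2)x^2}\,dx,
\end{equation*}
since $W\geq0$ if and only if $|\veps|\geq1$. A Laplace estimate at the endpoint $x=1$, namely $\int_1^{\infty}e^{-\beta x^2}\,dx=\tfrac{e^{-\beta}}{2\beta}(1+o(1))$ as $\beta\to\infty$, applied with $\beta=a_T T b_c+\tfrac12\sim a_c b_c T$, shows that this equals $\tfrac{e^{-1/2}}{a_c b_c T\sqrt{2\pi}}(1+o(1))=\tfrac{1}{a_c b_c T\sqrt{2\pi e}}(1+o(1))$, which is exactly \eqref{DEVBT1}. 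Consequently \eqref{DEVBT1} will follow from \eqref{PLAN-PAR} once we establish
\begin{equation}
\label{PLAN-RPL}
\int_{\dR}\left(1+\frac{iu}{a_T T}\right)^{-1}\bigl[\Psi_T(u)-\Psi_\infty(u)\bigr]\,du=o(1).
\end{equation}

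The hard part will be \eqref{PLAN-RPL}. On any fixed compact set $|u|\leq R$ it is immediate from the uniform convergence in \eqref{PLAN-PSILIM} and from $(1+iu/(a_T T))^{-1}\to1$. The delicate region is $|u|>R$: because $|\Psi_\infty(u)|\sim(2b_c|u|)^{-1/2}$ is not integrable, a crude modulus bound only yields an $O(T^{-1/2})$ tail, which is too weak, and one must exploit the oscillation carried by the phase $e^{-ib_c u}$. A single integration by parts in $u$ against $e^{-ib_c u}$ gains a factor $|u|^{-1}$ and makes the tail of the $\Psi_\infty$-integral $O(T^{-1}R^{-1/2})$, uniformly in $T$. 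For the tail of the $\Psi_T$-integral one combines the same device, on the range $R\leq|u|\leq\delta T$, with the explicit description of $\cR_T$ furnished by Lemma \ref{L-LEMFOND2}, which allows $\Psi_T$ and $\partial_u\Psi_T$ to be controlled uniformly there, and, on the range $|u|>\delta T$, with the exponential bound $|\Psi_T(u)|^2\leq\bigl(1+\xi^2u^2/T^2\bigr)^{-\eta T}$ coming from Lemma \ref{L-LEMDEC}. Establishing this uniform tail control is the main technical obstacle; granted it, \eqref{PLAN-RPL} holds and the lemma follows.
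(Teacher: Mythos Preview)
Your approach diverges from the paper's at the Taylor expansion of $T\bigl[L(a_T+iu/T)-L(a_T)\bigr]$. You keep only the first-order term $-ib_cu$; the paper keeps the second-order term as well and obtains, for $|u|=o(T)$,
\[
T\bigl[L(a_T+\tfrac{iu}{T})-L(a_T)\bigr]=-ib_cu-\frac{\sigma_c^2u^2}{2T}+O\!\left(\frac{|u|^3}{T^2}\right),
\]
with $\sigma_c^2=L''(a_c)=c^2/\bigl(2(2c-\theta)^3\bigr)$. This yields the refined approximation $\Phi_T(u)=\Phi(u)\exp\!\bigl(-\sigma_c^2u^2/(2T)\bigr)\bigl(1+o(1)\bigr)$ uniformly for $|u|=o(T^{2/3})$, with the same $\Phi=\Psi_\infty$ as yours. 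The Gaussian factor is the whole point: the paper then splits the Parseval integral at $s_T=T^{2/3}$, shows the tail $D_T=o\bigl(\exp(-\sigma_c^2s_T^2/(2T))\bigr)$ is negligible, and evaluates the main part $\int_{|u|\le s_T}\Phi(u)\exp(-\sigma_c^2u^2/(2T))\,du$ via Lemma~7.3 of \cite{BR01}, obtaining directly $\sqrt{2\pi}/(b_c\sqrt{e})$. No oscillatory-integral argument is needed because the Gaussian damping makes everything absolutely integrable.

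By dropping the quadratic term you are left with a limit $\Psi_\infty\notin L^1\cup L^2$ and must then repair the damage through \eqref{PLAN-RPL}. Your computation of $\dE[e^{-a_TTW}\rI_{W\ge0}]$ is a pleasant alternative to quoting \cite{BR01}, but the justification of \eqref{PLAN-RPL} is a genuine gap. Integration by parts against $e^{-ib_cu}$ handles the $\Psi_\infty$-tail, yet for the $\Psi_T$-tail on $R\le|u|\le\delta T$ you need uniform control of $\partial_u\Psi_T$, and since $H$ contains $-\tfrac12\log\tau$ with $\tau(a_T)\sim\mathrm{const}/T$, one has $H'(a_T+iu/T)$ of order $T$ there; sorting out that this still cooperates with the oscillation is not routine and is precisely what the Gaussian factor lets the paper sidestep. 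If you retain the $-\sigma_c^2u^2/(2T)$ term, your direct evaluation of $\dE[e^{-a_TTW}\rI_{W\ge0}]$ becomes unnecessary, but the rest of the argument goes through cleanly with a split at $T^{2/3}$ and dominated convergence.
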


\begin{proof}
Denote by $\Phi_T$ the characteristic function of $U_T$ under $\mathbb{P}_T$. We infer from \eqref{DEFET1} that
for all $u \in \dR$,
\begin{equation}
\label{PHIT1}
\Phi_T(u)=\exp \left(T L_T\left(a_T+\frac{iu}{T}\right)-TL_T(a_T)\right).
\end{equation}
Moreover, we obtain from \eqref{CVGPHIT1} and \eqref{CVGAT1} that for $T$ large enough
and for all $u \in \dR$ such that $|u|=o(T)$,
\begin{equation*}
\exp \left(T L\left(a_T+\frac{iu}{T}\right)-TL(a_T)\right)=\exp \left(-ib_c u - \frac{\sigma_c^2 u^2}{2T} \right) 
\left(1+O\left( \frac{|u|^3}{T^2}\right)\right)
\end{equation*}
where $\sigma^2_{c}$ and $b_c$ are given by \eqref{DEFACSIGC2} and \eqref{DEFBC}.
Consequently, as soon as $|u|=o(T^{2/3})$,
\begin{equation}
\exp \left(T L\left(a_T+\frac{iu}{T}\right)-TL(a_T)\right)=\exp \left(-ib_c u - \frac{\sigma_c^2 u^2}{2T} \right) 
\Bigl(1+o(1)\Bigr)
\label{CVGCFL1}
\end{equation}
and the remainder $o(1)$ is uniform. By the same token,
\begin{equation}
\label{CVGCFH1}
\lim_{T\rightarrow \infty}
\exp \left(  H\Bigl(a_T+\frac{iu}{T}\Bigr)-H(a_T) \right)
=\frac{1}{\sqrt{1-2ib_{c}u}}.
\end{equation}
Therefore, we deduce from Lemma \ref{L-LEMFOND2} together with \eqref{CVGCFL1}, \eqref{CVGCFH1} and the
boundedness of $R_T(a_T)$,
that for all $u\in \dR$ such that $|u|=o(T^{2/3})$,
\begin{equation}
\label{CVGFINPHIT1}
 \Phi_T(u)  = \Phi(u) \exp \left( - \frac{\sigma_c^2 u^2}{2T} \right)
 \Bigl(1+o(1)\Bigr)
\end{equation}
where 
$$
\Phi(u)=\frac{1}{\sqrt{1-2ib_cu}}\exp\Bigl(-ib_cu \Bigr).
$$
It means that the distribution of $U_T$ under $\dP_T$ converges to $b_c(\xi^2-1)$, where 
$\xi$ stands for an $\cN(0,1)$ random variable. It also implies that, for $T$ large enough, 
$\Phi_T$ belongs to $L^2(\mathbb{R})$. Hereafter, we deduce from Parseval's formula that $B_T$, given by \eqref{NBT1}, can be rewritten as
\begin{equation}
 \label{NNBT1}
 B_T=\frac{1}{2\pi T a_T } \int_{\mathbb{R}}\left(1+\frac{iu}{a_T T}\right)^{-1}\Phi_T(u)\,du.
\end{equation}
We split $B_T$ into two terms, $B_T=C_T+D_T$, where
\begin{eqnarray}
\label{DEFBSC}
C_T&=&
\frac{1}{2\pi Ta_T} \int_{|u| \leq s_T}
\left(1+\frac{iu}{Ta_T}\right)^{-1}\Phi_{T}(u)\, du, \\
D_T&=&
\frac{1}{2\pi Ta_T}  \int_{|u| > s_T}
\left(1+\frac{iu}{ Ta_T}\right)^{-1}\Phi_{T}(u)\, du
\label{DEFBSD}
\end{eqnarray}
with $s_T=T^{2/3}$. On the one hand, it follows from \eqref{DEFBSD} that $D_T$ is negligible, as
\begin{equation}
\label{FINBSD}
D_T=o\left(\exp\left( -\frac{\sigma_c^2 s_T^2}{2T} \right) \right).
\end{equation}
On the other hand, we find from \eqref{CVGFINPHIT1} that for $T$ large enough
\begin{equation*}
2\pi T a_T C_T=\int_{|u| \leq s_T}
\Phi(u)\exp\left(-\frac{\sigma_c^2u^2}{2T}\right) \Bigl(1+o(1)\Bigr)
\, du,
\end{equation*}
which leads, thanks to Lemma 7.3 in \cite{BR01}, to
\begin{equation}
\label{FINBSC}
\lim_{T\rightarrow \infty} 2\pi T a_T C_T= \frac{\sqrt{2\pi}}{b_c \sqrt{e}}.
\end{equation}
Hence, \eqref{FINBSD} together with \eqref{FINBSC} clearly imply \eqref{DEVBT1}.
\end{proof}

\noindent
Finally, we immediately deduce \eqref{E-SLDPT3}  from \eqref{DEVAT1} and \eqref{DEVBT1}. 
\demend

\subsection{Proof of Theorem \ref{T-SLDPT}, third part}
Assume now that $c=\theta/3$ which means that $a_c=a_{\theta}$ with
$a_{\theta}=-4\theta/3$. There exists a unique $a_T$, which belongs to the interior of
$\cD_\cL$ and converges to its border $a_{\theta}$, solution of the implicit equation
\begin{equation}
\label{EQIMP2}
L^{\prime}(a_T) +\frac{1}{T} H^{\prime}(a_T)=0.
\end{equation}
We deduce from \eqref{E-DEFL2}, \eqref{E-DEFH2} together with \eqref{EQIMP2} that
\begin{equation*}
(\theta^2 T + \gamma^2) \tau(a_T)= \frac{\theta^2 c(2\varphi(a_T) -a_T)}{\varphi(a_T)(\varphi(a_T)+c)}
\end{equation*}
where $\varphi(a)=\sqrt{\theta^2+ 2 a c}$ and $\tau(a)= \varphi(a) - a - \theta$. We obviously have
$$
\tau(a)=-\frac{(\varphi(a)+c)(\varphi(a)-\theta)}{2c}
$$
which leads to 
\begin{equation*}
(\theta^2 T + \gamma^2) (\varphi(a_T)+c)^2= \frac{2\theta^2 c^2(a_T- 2\varphi(a_T))}{\varphi(a_T)(\varphi(a_T)- \theta)}.
\end{equation*}
It implies after some elementary calculations that
\begin{eqnarray}
\label{CVGPHIT2}
& &\lim_{T \rightarrow \infty} T  (\varphi(a_T)+c)^2= - \frac{\theta}{3}, \\
\label{CVGAT2}
& &\lim_{T \rightarrow \infty} T  (a_T-a_\theta)^2= - \frac{\theta}{3}, \\
\label{CVGTAUT2}
& &\lim_{T \rightarrow \infty} \sqrt{T}  \tau(a_T)= 2 \sqrt{ -\frac{\theta}{3}}.
\end{eqnarray}
Hereafter, we shall make use of the decomposition $\dP(\wh{\theta}_T \geq c)=A_TB_T$ given by
\begin{eqnarray}
\label{DEFAT2} A_T &=& \exp (TL_T(a_T)),\\
\label{DEFBT2} B_T &=& \dE_T\left[\exp (-Z_T(a_T))\rI_{Z_T(a_T) \geq 0}\right],
\end{eqnarray}
where $\dE_T$ stands for the expectation after the time-varying change of probability
\begin{equation}
\label{DEFET2}
 \frac{\dd \dP_T}{\dd \dP} = \exp \left( Z_T(a_T) - TL_T(a_T)\right).
\end{equation}
We obtain from \eqref{E-LOGDETMT} and \eqref{E-LASTLAP} that $R_T(a_T)=\cR_T(a_T,-ca_T)$ remains bounded when $T$
goes to infinity. Hence, it follows from Lemma \ref{L-LEMFOND2} together with \eqref{CVGPHIT2}, \eqref{CVGAT2}, and \eqref{CVGTAUT2} that
\begin{equation}
\label{DEVAT2}
 A_T = \frac{\sqrt{2}}{3}\exp\Bigl(  -TI_{\theta}(c)+ \frac{\gamma^2}{3\theta} \Bigr)
\left(\frac{-\theta e T}{3}\right)^{\!\!1/4} 
 \!\!\Bigl(1+o(1)\Bigr).
\end{equation}
On the other hand, $B_T$ can be rewritten as 
\begin{equation}
\label{NBT2}
 B_T=\dE_T \left[\exp (-a_T\sqrt{T} U_T)\rI_{U_T \geq 0}\right]
 \hspace{0.5cm} \text{where} \hspace{0.5cm}
 U_T=\frac{Z_T(1)}{\sqrt{T}}.
\end{equation}

\begin{lem}
\label{L-BT2}
 For $ c =\theta/3$, we have 
\begin{equation}
\label{DEVBT2}
 B_T=\frac{1}{4 \pi \sqrt{a_\theta T} }\exp(-1/4)\Gamma(1/4)\Bigl(1+o(1)\Bigr).
\end{equation}
\end{lem}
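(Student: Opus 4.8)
The strategy mirrors the proofs of Lemmas~\ref{L-BT} and~\ref{L-BT1}: express the characteristic function of $U_T$ under $\dP_T$ via $\cL_T$, apply Parseval's formula, and pass to the limit. The new feature of the regime $c=\theta/3$ is that $a_\theta$ is simultaneously the boundary of $\cD_\cL$ and the point where $L'$ vanishes, so the change of probability \eqref{DEFET2} must be based on the interior point $a_T$ of \eqref{EQIMP2} (the point $a_\theta$ itself being inadmissible, since $H$ blows up there), and the limiting law of $U_T$ under $\dP_T$ will be a genuine mixture of the Gaussian law of part~a) and the $\chi^2$-type law of part~b).

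First I would let $\Phi_T$ be the characteristic function of $U_T=Z_T(1)/\sqrt{T}$ under $\dP_T$, where $U_T$ is given by \eqref{NBT2}, so that $\Phi_T(u)=\exp\bigl(TL_T(a_T+iu/\sqrt{T})-TL_T(a_T)\bigr)$ by \eqref{DEFET2}. Arguing exactly as in the proof of Lemma~\ref{L-BT} (since $c=\theta/3\neq 0$, the empirical spectral measure $\nu_T$ of Lemma~\ref{L-LEMDEC} carries mass $\ge 2\eta T$ away from the origin, which gives $|\Phi_T(u)|^2\le(1+\xi^2u^2/T)^{-\eta T}$ for suitable $\xi,\eta>0$), one sees that $\Phi_T\in L^2(\dR)$ for $T$ large, and Parseval's formula yields
\begin{equation*}
B_T=\frac{1}{2\pi a_T\sqrt{T}}\int_{\dR}\Bigl(1+\frac{iu}{a_T\sqrt{T}}\Bigr)^{-1}\Phi_T(u)\,du.
\end{equation*}

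Next I would compute the pointwise limit of $\Phi_T$. Writing $TL_T(a)=TL(a)+H(a)+T^{-1}R_T(a)$ from Lemma~\ref{L-LEMFOND2} (with $R_T(a_T)$ bounded, as already observed), splitting $H(a)=-\tfrac12\log\tau(a)+\rho(a)$ with $\rho$ analytic at $a_\theta$, and using the identity $\tau(a)=-(\varphi(a)+c)(\varphi(a)-\theta)/(2c)$ together with \eqref{CVGPHIT2}, \eqref{CVGAT2} and \eqref{CVGTAUT2}, I expect that for every $u\in\dR$
\begin{equation*}
\lim_{T\to\infty}\Phi_T(u)=\Phi(u):=\frac{1}{\sqrt{1-iu/\beta}}\,\exp\Bigl(-\frac{iu}{2\beta}-\frac{\sigma_\theta^2 u^2}{2}\Bigr),\qquad \beta:=\sqrt{-\theta/3}=\tfrac12\sqrt{a_\theta},
\end{equation*}
where $\sigma_\theta^2=1/(2\beta^2)$. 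The three contributions are: the term $-\tfrac12L''(a_T)u^2\to-\tfrac12\sigma_\theta^2 u^2$ from the analytic part of $L$; the linear term $\sqrt{T}\,L'(a_T)\,iu\to-iu/(2\beta)$, which is finite and nonzero because \eqref{EQIMP2} forces $L'(a_T)=-T^{-1}H'(a_T)\sim-1/(2\beta\sqrt{T})$; and the factor $(1-iu/\beta)^{-1/2}$ from $-\tfrac12\log\bigl(\tau(a_T+iu/\sqrt{T})/\tau(a_T)\bigr)$, since $\tau(a_T+iu/\sqrt{T})/\tau(a_T)\to1-iu/\beta$ (the part coming from $\rho$ disappearing in the limit). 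I would then split $B_T=C_T+D_T$ at $|u|=s_T$ with $s_T=T^{1/8}$. On $|u|\le s_T$ the cubic remainder in the Taylor expansion of $TL(a_T+iu/\sqrt{T})$ is $O(s_T^3/\sqrt{T})=o(1)$ uniformly, the prefactor $(1+iu/(a_T\sqrt{T}))^{-1}$ tends to $1$, and $|\Phi_T(u)|$ is dominated by a fixed Gaussian, so dominated convergence gives $2\pi a_T\sqrt{T}\,C_T\to\int_{\dR}\Phi(u)\,du$; on $|u|>s_T$ the bound $|\Phi_T(u)|\le(1+\xi^2u^2/T)^{-\eta T/2}$ makes $\sqrt{T}\,D_T$ decay faster than any power of $T$.

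Finally, the substitution $u=2\beta v$ together with $e^{-iv-v^2}=e^{-1/4}e^{-(v+i/2)^2}$ turns $\int_{\dR}\Phi(u)\,du$ into $2\beta e^{-1/4}$ times a contour integral of $(-2iw)^{-1/2}e^{-w^2}$ along the line $\Im w=\tfrac12$; shifting the contour down to the real axis (the sole singularity being the integrable branch point $w=0$ on the boundary) and using $\int_0^\infty w^{-1/2}e^{-w^2}\,dw=\tfrac12\Gamma(1/4)$, one gets $\int_{\dR}\Phi(u)\,du=\tfrac12\sqrt{a_\theta}\,e^{-1/4}\Gamma(1/4)$, and since $a_T\to a_\theta$ this gives exactly \eqref{DEVBT2}. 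The main obstacle is the pointwise limit of $\Phi_T$: because $a_T$ lies at distance $O(T^{-1/2})$ from the logarithmic singularity $a_\theta$ of $H$, while the increment $iu/\sqrt{T}$ is of the very same order, $H$ cannot be expanded at $a_T$ and the quadratic, linear and logarithmic contributions must be tracked simultaneously with their exact rates; a secondary constraint is that $s_T$ must sit in the window $1\ll s_T\ll T^{1/6}$, so that both the cubic remainder is uniformly $o(1)$ on $|u|\le s_T$ and the $L^2$ tail bound from Lemma~\ref{L-LEMDEC} forces $\sqrt{T}\,D_T\to0$.
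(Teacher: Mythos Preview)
Your proposal is correct and follows the paper's approach: establish $\Phi_T\in L^2(\dR)$ as in Lemma~\ref{L-BT}, apply Parseval, identify the mixed Gaussian--$\chi^2$ limit $\Phi(u)=(1-2id_\theta u)^{-1/2}\exp(-id_\theta u-\sigma_\theta^2 u^2/2)$ (your $1/(2\beta)$ is the paper's $d_\theta=\sigma_\theta/\sqrt{2}$), and pass to the limit in the Parseval integral. The paper invokes dominated convergence directly rather than your $C_T/D_T$ split at $s_T=T^{1/8}$, and simply records the value of $\int_\dR\Phi(u)\,du$ without your contour-shift evaluation, but these are cosmetic differences.
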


\begin{proof}
Via the same lines as in the proof of Lemma \ref{L-BT}, we find that the characteristic function $\Phi_T$ of $U_T$, under $\dP_T$,
belongs to $L^2(\dR)$. Hence, it follows from Parseval's formula that 
\begin{equation}
 \label{NNBT2}
 B_T=\frac{1}{2\pi a_T\sqrt{T}} \int_{\mathbb{R}}\left(1+\frac{iu}{a_T\sqrt{T}}\right)^{-1}\Phi_T(u)du.
\end{equation}
However, we obtain from \eqref{CVGPHIT2} and \eqref{CVGAT2} that
\begin{equation}
\label{CVGCFL2}
\lim_{T\rightarrow \infty}
T\Bigl(L\Bigl(a_T+\frac{iu}{\sqrt{T}}\Bigr)-L(a_T) \Bigr)=-id_{\theta}u -\frac{\sigma^2_{\theta}u^2}{2}
\end{equation}
where $\sigma^2_{\theta}$ is given by \eqref{DEFACSIGC4} and $d_{\theta}= \sigma_{\theta}/\sqrt{2}$.
By the same token,
\begin{equation}
\label{CVGCFH2}
\lim_{T\rightarrow \infty}
\exp \left(  H\Bigl(a_T+\frac{iu}{\sqrt{T}}\Bigr)-H(a_T) \right)
=\frac{1}{\sqrt{1-2id_{\theta}u}}.
\end{equation}
Therefore, we deduce from Lemma \ref{L-LEMFOND2} together with \eqref{CVGCFL2}, \eqref{CVGCFH2} and the boundedness
of $R_T(a_T)$, 
the pointwise convergence 
\begin{equation}
\label{CVGFINPHIT2}
\lim_{T \rightarrow \infty}  \Phi_T(u)  = \Phi(u)= \frac{1}{\sqrt{1-2id_{\theta}u}}\exp\Bigl(-id_{\theta}u -\frac{\sigma^2_{\theta}u^2}{2} \Bigr).
\end{equation}
It shows that the distribution of $U_T$ under $\dP_T$ converges to $\sigma_{\theta}\zeta + d_{\theta}(\xi^2-1)$, where $\zeta$ and
$\xi$ are two independent random variables sharing the same $\cN(0,1)$ distribution. Finally, 
we obtain from \eqref{NNBT2} together with \eqref{CVGFINPHIT2} and the
Lebesgue dominated convergence theorem that
\begin{equation*}
 B_T=\frac{1}{2 \pi a_T \sqrt{T}}\ \int_\dR \Phi(u)\, du \Bigl(1+o(1)\Bigr)
 =\frac{1}{4 \pi \sqrt{a_\theta T} }\exp(-1/4)\Gamma(1/4)\Bigl(1+o(1)\Bigr)
\end{equation*}
which achieves the proof of Lemma \ref{L-BT2}.
\end{proof}

\noindent
The proof of \eqref{E-SLDPT4} immediately follows from the conjunction of \eqref{DEVAT2} and \eqref{DEVBT2}. 
\demend

\subsection{Proof of Theorem \ref{T-SLDPT}, fourth part}
Assume now that $c=0$. We want to obtain the leading asymptotic behavior of 
$\dP(\wh{\theta}_T \geq 0)=\dP(X_T^2-2X_T\overline{X}_T \geq T)$.
For all $\alpha>0$, we have the decomposition $\mathbb{P}(X_T^2-2X_T\overline{X}_T \geq T)=A_T+B_T$ where
\begin{eqnarray*}
A_T &=& \mathbb{P}(X_T^2-2X_T\overline{X}_T \geq T, \abs{\overline{X}_T} \leq \alpha), \\
B_T &=& \mathbb{P}(X_T^2-2X_T\overline{X}_T \geq T, \abs{\overline{X}_T} > \alpha).
\end{eqnarray*}
First of all, if 
$$\alpha = \abs{\frac{\gamma}{\theta}}+\frac{2}{\sqrt{-\theta}},$$
it is not hard to see that $B_T$ is negligible. As a matter of fact, we deduce from
the simple upper bound $B_T \leq \mathbb{P}(\abs{\overline{X}_T} > \alpha)$
together with \eqref{ineg1} that
\begin{equation*}
\limsup_{T \rightarrow + \infty} \frac{1}{T}\log B_T \leq 
2 \theta, \hspace{1cm}
B_T = o\left(\exp\left(\frac{3 \theta T}{2} \right)\right).
\end{equation*}
Next, we recall that the sequence $(\wh{\theta}_T)$ satisfies an LDP with good rate function $I_{\theta}$ given by \eqref{RATET}.
Consequently,
$$\lim_{T \rightarrow + \infty} \frac{1}{T} \log \dP(\wh{\theta}_T \geq 0)=
\lim_{T \rightarrow + \infty} \frac{1}{T} \log \dP(X_T^2 -2 X_T \overline{X}_T \geq T) = \theta$$
which clearly implies that
$\dP(X_T^2 -2 X_T \overline{X}_T \geq T)=A_T(1+o(1))$.
From now on, it only remains to establish the leading asymptotic behavior of $A_T$. 
We already saw at the beginning of Section \ref{S-KSL} that the random vector
$$
\begin{pmatrix}
X_T \\
\overline{X}_T
\end{pmatrix} 
\sim \cN \left(
\begin{pmatrix}
m_T\\
\mu_T
\end{pmatrix}, \Gamma_T(\theta) \right).
$$
Therefore,
$$A_T = \int_{-\alpha}^{\alpha} h_T(x) f_T(x)\,dx
\hspace{0.5cm} \text{where} \hspace{0.5cm}
h_T(x)=\dP(X_T^2 -2 X_T \overline{X}_T \geq T | \overline{X}_T=x)
$$
and $f_T$ is the Gaussian probability density function of $\overline{X}_T$. 
Moreover, as $c_T >0$, the conditional distribution of $X_T$ given $\overline{X}_T=x$ is $\cN(\nu_T,s^2_T)$ with
$\nu_T = m_T + b_T(x-\mu_T)/cT$ and  $s_T^2 = a_T-b_T^2/c_T$.
Furthermore, for all $x \in \dR$, $h_T(x)$ can be rewritten as
$$h_T(x)=\dP \left(\frac{X_T-\nu_T}{s_T} \leq -y_T \Big| \overline{X}_T=x\right)+\dP \left(\frac{X_T-\nu_T}{s_T} \geq z_T \Big| 
\overline{X}_T=x\right)$$
where
$$y_T = \frac{-x +\sqrt{x^2+T}+\nu_T}{s_T}
\hspace{1cm} \textrm{and} \hspace{1cm}
z_T =\frac{x +\sqrt{c^2+T}-\nu_T}{s_T}.
$$
One can easily check that
$$
\liminf_{T \rightarrow + \infty} \Bigl\{y_T,z_T\Bigr\} \geq 
\liminf_{T \rightarrow + \infty} 
\frac{-\alpha+\sqrt{T}-m_T-b_T c_T^{-1}( \alpha-\mu_T)}{s_T}= +\infty.$$
It follows from standard asymptotic analysis of Gaussian distribution tails that
$$h_T(x)=\frac{1}{y_T\sqrt{2\pi}}\exp\left(-\frac{y_T^2}{2} \right)\Bigl(1+o(1)\Bigr) + \frac{1}{z_T\sqrt{2\pi}}
\exp\left(-\frac{z_T^2}{2} \right)\Bigl(1+o(1)\Bigr)$$
where $o(1)$ is uniform with respect to $x$. We split $A_T$ into two terms, $A_T = C_T+D_T$ 
\begin{eqnarray*}
C_T &=& \int_{-\alpha}^{\alpha} \frac{1}{y_T\sqrt{2\pi} } \exp\left(-\frac{y_T^2}{2} \right)
\frac{1}{\sqrt{2\pi c_T}} \exp\left(-\frac{(x-\mu_T)^2}{2c_T} \right) dx \ \Bigl(1+o(1)\Bigr),
\\
D_T &=& \int_{-\alpha}^{\alpha} \frac{1}{z_T\sqrt{2\pi} } \exp\left(-\frac{z_T^2}{2} \right)
\frac{1}{\sqrt{2\pi c_T}} \exp\left(-\frac{(x-\mu_T)^2}{2c_T} \right) dx \ \Bigl(1+o(1)\Bigr).
\end{eqnarray*}
We find from a careful asymptotic expansion inside the integral $C_T$ together with the change of variables
$y=-\theta(x+\gamma/\theta)\sqrt{T}$ and Lebesgue's dominated convergence theorem, that
\begin{equation*}
\lim_{T \rightarrow +\infty} \sqrt{2\pi}\sqrt{-2\theta T}e^{-\theta T}C_T = 
\frac{1}{\sqrt{2\pi}}
\int_{-\infty}^{+\infty} \exp\left( 2y+\frac{\gamma^2}{\theta} -\frac{y^2}{2}\right) dy
=\exp\left( \frac{\gamma^2}{\theta}+2 \right).
\end{equation*}
By the same token, we also obtain that
$$\lim_{T \rightarrow +\infty} \sqrt{2\pi}\sqrt{-2\theta T}e^{-\theta T}D_T = \exp\left( \frac{\gamma^2}{\theta}+2 \right)$$
which is exactly what we wanted to prove.
\demend

\section*{Appendix A: Proof of Lemma \ref{L-LEMFOND1}.}
\renewcommand{\thesection}{\Alph{section}}
\renewcommand{\theequation}{\thesection.\arabic{equation}}
\setcounter{section}{1}
\setcounter{equation}{0}

For all $(a,b,c) \in \dR^3$, let
$$
\cZ_T(a,b,c)= a\sqrt{T} X_T + b \int_0^T X_t^2\, dt + c  \int_0^T X_t \,dt.
$$
We shall calculate the limit $\Lambda$ of the normalized cumulant generating function $\Lambda_T$ of the random
variable $\cZ_T(a,b,c)$.
First of all, as in \cite{DY93}, it follows from Girsanov's formula associated with \eqref{OUPS} that
\begin{eqnarray*}
\Lambda_T(a,b,c) &\!=\!& \frac{1}{T} \log \dE\Bigl[\exp(\cZ_T(a,b,c))\Bigr], \\
&\!=\!& \frac{1}{T} \log \dE_{\vp, \delta} \!\left[ \exp\Bigl((\theta - \vp) \!\int_0^T \!\! X_t dX_t +\frac{1}{2}( 2 b  - \theta^2 + \vp^2) \!\int_0^T\!\! X_t^2 \,dt 
- \xi_T\Bigr) \right]
\end{eqnarray*}
where $\dE_{\vp,\delta}$ stands for the expectation after the change of probability, 
$$
\frac{\dd \dP_{\vp,\delta}}{\dd \dP_{\theta, \gamma}}= 
\exp \left( ( \vp- \theta) \int_0^T X_t dX_t - \frac{1}{2} (\vp^2 - \theta^2) \int_0^T X_t^2\,dt + \zeta_T
\right)
$$
with $\xi_T =  \zeta_T -a \sqrt{T} X_T -cT\overline{X}_T$ and
$$
\zeta_T = (\theta \gamma -\vp \delta) T\overline{X}_T - (\gamma-\delta) X_T + \frac{1}{2}(\gamma^2 - \delta^2)T. 
$$
Consequently, if we assume that $\theta^2-2b>0$ and if we choose $\vp=\sqrt{\theta^2-2b}$ and $\delta=0$, 
$\Lambda_T(a,b,c)$ reduces to
\begin{equation}
\label{DECMGFLB1}
\Lambda_T(a,b,c) = \frac{\vp - \theta - \gamma^2}{2}+ \frac{1}{T} \log \dE_{\vp,0} \left[ \exp\Bigl(-\frac{1}{2} V_T^\prime J V_T
+ U_T^\prime V_T \Bigr) \right]
\end{equation}
where the vectors $U_T$ and $V_T$ are given by
$$
U_T=\begin{pmatrix}
a \sqrt{T} + \gamma \\
T(c-\theta \gamma)
\end{pmatrix}
\hspace{1cm} \text{and} \hspace{1cm}
V_T=\begin{pmatrix}
X_T \\
\overline{X}_T
\end{pmatrix}
$$
and $J$ is the diagonal matrix of order two
$$
J=\begin{pmatrix}
\, \vp - \theta \, & 0 \\
0 & 0
\end{pmatrix}.
$$
Under the new probability $\dP_{\vp,0}$, $V_T$ is Gaussian random vector with zero mean and covariance matrix $\Gamma_T(\vp)$ given by 
\eqref{DEFGAMT}. Denote by $M_T(a,b,c)$ the square matrix of order two
$$
M_T(a,b,c)= \rI_2 +J \Gamma_T(\vp)=\begin{pmatrix}
1+(\vp - \theta) a_T(\vp) & (\vp - \theta) b_T(\vp)  \\
0 & 1
\end{pmatrix}
$$
where $\rI_2$ stands for the identity matrix of order two.
We clearly have
$$\det M_T(a,b,c) = 1+(\vp- \theta) a_T(\vp)=1+\frac{\vp- \theta}{2\vp}\Bigl(e^{2\vp T}-1\Bigr)$$
which leads to
\begin{equation}
\label{LIMDETMT}
\lim_{T \rightarrow \infty} \frac{\det M_T(a,b,c)}{e^{2\vp T}}=\frac{\vp- \theta}{2\vp}.
\end{equation}
Hence, as $\theta<0<\vp$, it follows from \eqref{LIMDETMT} that for $T$ large enough, the matrix $M_T(a,b,c)$
is positive definite. It is also not hard to see from \eqref{DEFGAMT} that
\begin{equation}
\label{LIMDETGAMT}
\lim_{T \rightarrow \infty} \frac{T \det \Gamma_T(\vp)}{e^{2\vp T}}=\frac{1}{2\vp^3}.
\end{equation}
Therefore, we obtain from standard Gaussian calculations that
\begin{equation}
\label{DECMGFLB2}
\Lambda_T(a,b,c) = \frac{\vp-\theta - \gamma^2}{2}- \frac{1}{2T} \log \Bigl(\det M_T(a,b,c) \Bigr)
+ \frac{1}{T} H_T(a,b,c)
\end{equation}
where
$H_T(a,b,c)= 2^{-1} U_T^\prime \Gamma_T(\vp) M_T^{-1}(a,b,c)U_T$.
On the one hand, we immediately obtain from \eqref{LIMDETMT} that
\begin{equation}
\label{LIMLOGDET}
\lim_{T \rightarrow \infty} \frac{1}{2T} \log \Bigl(\det M_T(a,b,c) \Bigr)=\vp.
\end{equation}
On the other hand, we clearly have
$$
H_T(a,b,c)= \frac{1}{2\det M_T(a,b,c)} \Bigl( (a\sqrt{T} + \gamma)^2 a_T(\vp) +2T d_T(\vp)
+T^2(c-\theta \gamma)^2 e_T(\vp)\Bigr)
$$
where $d_T(\vp)=(a\sqrt{T} + \gamma)(c-\theta \gamma)b_T(\vp)$ and $e_T(\vp)=c_T(\vp)+ (\vp - \theta) \det \Gamma_T(\vp)$.
Consequently, we obtain from \eqref{LIMDETMT} and \eqref{LIMDETGAMT} that
\begin{equation}
\label{LIMHT}
\lim_{T \rightarrow \infty} \frac{1}{T} H_T(a,b,c)  = \frac{1}{2}\left(\frac{a^2}{\vp - \theta} + \frac{(c-\theta \gamma)^2}{\vp^2} \right).
\end{equation}
Finally, we deduce from \eqref{DECMGFLB2} together with \eqref{LIMLOGDET} and \eqref{LIMHT} that
\begin{equation}
\label{LIMLBT}
\lim_{T \rightarrow \infty} \Lambda_T(a,b,c) = -\frac{1}{2}\left(\theta  + \vp+ \gamma^2\right)
+\frac{1}{2}\left(\frac{a^2}{\vp- \theta}\right)+\frac{1}{2} \left( \frac{c-\theta \gamma}{\vp}\right)^2,
\end{equation}
which is exactly what we wanted to prove.
\demend

\section*{Appendix B: Proof of Lemma \ref{L-LEMFOND2}.}
\renewcommand{\thesection}{\Alph{section}}
\renewcommand{\theequation}{\thesection.\arabic{equation}}
\setcounter{section}{2}
\setcounter{equation}{0}

Our goal is to establish the full asymptotic expansion for the normalized cumulant generating function
$\cL_T$ of the random variable $\cZ_T(a,b)$. First of all, as in the proof of Lemma \ref{L-LEMFOND1}, it follows from Girsanov's formula 
associated with \eqref{OUPS} that
\begin{eqnarray*}
\cL_T(a,b) &\!=\!& \frac{1}{T} \log \dE\Bigl[\exp(\cZ_T(a,b))\Bigr], \\
&\!=\!& \frac{1}{T} \log \dE_{\vp, \delta}\! \left[ \exp\Bigl((a+\theta - \vp) \!\int_0^T \!\! X_t dX_t 
\!+\!\frac{1}{2}( 2 b  - \theta^2 + \vp^2) \!\int_0^T\!\! X_t^2 \,dt - \xi_T\Bigr) \right]
\end{eqnarray*}
where $\xi_T = aX_T \overline{X}_T+ \zeta_T  +b T (\overline{X}_T)^2$ and
$ \zeta_T = (\theta \gamma -\vp \delta) T\overline{X}_T - (\gamma-\delta) X_T + (\gamma^2 - \delta^2)T/2$.
Consequently, if we assume that $\theta^2-2b>0$ and if we choose $\vp=\sqrt{\theta^2-2b}$ and $\delta=0$, $\cL_T(a,b)$ reduces to
\begin{equation*}
\label{DECOMGF1}
\cL_T(a,b) =  \frac{\tau- \gamma^2}{2}+ \frac{1}{T} \log \dE_{\vp,0} \left[ \exp\Bigl(-\frac{1}{2} V_T^\prime J_T V_T
+ \gamma U_T^\prime V_T \Bigr) \right]
\end{equation*}
with $\tau=\varphi -(a+\theta)$, where the vectors $U_T$ and $V_T$ are given by
$$
U_T=\begin{pmatrix}
\, 1 \\
-\theta T
\end{pmatrix}
\hspace{1cm} \text{and} \hspace{1cm}
V_T=\begin{pmatrix}
X_T \\
\overline{X}_T
\end{pmatrix}
$$
and $J_T$ is the diagonal matrix of order two
$$
J_T=\begin{pmatrix}
\, \tau \, & a \\
a & 2bT
\end{pmatrix}.
$$
We already saw in Appendix A that under the new probability $\dP_{\vp,0}$, $V_T$ is Gaussian random vector with zero mean and covariance matrix 
$\Gamma_T(\vp)$ given by \eqref{DEFGAMT}. Let $M_T(a,b)$ be the square matrix of order two
$$
M_T(a,b)= \rI_2 +J_T \Gamma_T(\vp)=\begin{pmatrix}
1+\tau a_T(\vp) +a  b_T(\vp)  & \tau b_T(\vp)+a  c_T(\vp) \vspace{1ex} \\
 2bT b_T(\vp) +a  a_T(\vp) & 1+2bTc_T(\vp)+a  b_T(\vp) 
\end{pmatrix}.
$$
It is not hard to see that
\begin{equation}
\label{DETMTAB}
\det(M_T(a,b))=1+2ab_T(\vp)+2bTc_T(\vp)+\tau a_T(\vp)+(2\tau T b - a^2) \det \Gamma_T(\vp).
\end{equation}
Hence, we deduce from \eqref{LIMDETGAMT} that
\begin{equation}
\label{LIMDETMTN}
\lim_{T \rightarrow \infty} \frac{\det M_T(a,b)}{e^{2\vp T}}=\frac{\tau \theta^2}{2\vp^3}.
\end{equation}
Consequently, as soon as $\tau >0$, we find from \eqref{LIMDETMTN} that for $T$ large enough, the matrix $M_T(a,b)$
is positive definite. Therefore, it follows from standard Gaussian calculations that
\begin{equation}
\label{DECOMGF}
\cL_T(a,b)= \frac{\tau - \gamma^2}{2}- \frac{1}{2T} \log \Bigl(\det M_T(a,b) \Bigr)
+ \frac{\gamma^2}{2T} U_T^\prime \Gamma_T(\vp) M_T^{-1}(a,b)U_T.
\end{equation}
We shall now to improve convergence \eqref{LIMDETMTN} as follows.
We have for $T$ large enough
\begin{equation}
\label{E-LOGDETMT}
\frac{1}{2T} \log \Bigl(\det M_T(a,b) \Bigr) = \vp+\frac{1}{2T} \log \left( \frac{\tau \theta^2}{2\vp^3} \right) + \frac{1}{2T} \log 
\Bigl(1+\cK_T(a,b)\Bigr) +r_T(a,b)
\end{equation} 
where $\cK_T(a,b)$ is given by
\begin{equation*}
 \label{DEFKT}
 \cK_T(a,b)=\frac{1}{\tau \varphi \theta^2}\left( \frac{1}{T} ( 2b \varphi +2a \varphi^2-a^2\varphi - 4\tau b ) +\frac{2a^2}{T^2}\right),
\end{equation*}
and $r_T(a,b)$ is such that $\abs{r_T(a,b)} \leq P(a,b, \vp, T)e^{-\vp T}$
with $P$ a rational function.
As a matter of fact, we already saw from \eqref{DETMTAB} that
$$\det(M_T(a,b))=1+\Delta_T(a,b)+(2\tau T b - a^2) \det \Gamma_T(\vp)$$
where $\Delta_T(a,b)=2ab_T(\vp)+2bT(\vp)c_T(\vp)+\tau a_T(\vp)$.
We obtain from straightforward calculations that
$$\det \Gamma_T(\vp) = \frac{(T\vp-2)e^{2\vp T} +4e^{\vp T}-(T\vp +2)}{2\vp^4 T^2}.$$
In addition, we deduce from the definition of $a_T(\vp), b_T(\vp)$ and $c_T(\vp)$ that
$$
\Delta_T(a,b) = \left( \frac{\tau}{2\vp}+\frac{a\vp+b}{\vp^3 T} \right) e^{2\vp T} -\frac{2a\vp+4b}{\vp^3 T} e^{\vp T}
+\frac{2a\vp+6b+4bT\vp-\tau\vp^2 T}{2\vp^3 T}.
$$
Consequently,
\begin{eqnarray*}
\det(M_T(a,b))&=&\left( \frac{\tau(\vp^2+2b)}{2\vp^3}+\frac{2b\vp+2a\vp^2-4\tau b-a^2 \vp}{2\vp^4 T} +\frac{2a^2}{2\vp^4 T^2} \right) e^{2\vp T}\\
&& \ +P_1(a,b,\varphi,T)e^{\varphi T}+ P_2(a,b,\varphi,T)
\end{eqnarray*}
where $P_1$ and $P_2$ are rational functions, leading to \eqref{E-LOGDETMT}.
Furthermore, concerning the last term in \eqref{DECOMGF}, we have for $T$ large enough
\begin{equation}
\label{E-LASTLAP}
\frac{\gamma^2}{2T} U_T^\prime \Gamma_T(\vp) M_T^{-1}(a,b)U_T= \frac{\gamma^2}{2}-\frac{1}{2T}\frac{\gamma^2(\vp+a+\theta)}{\theta^2}+\frac{1}{T}\cJ_T(a,b)+\rho_T(a,b),
\end{equation}
where $\cJ_T(a,b)$ is given by
$$
 \cJ_T(a,b)=T\frac{\gamma^2}{2}\Bigl(
\frac{I_T(f,g)}{I_T(h,\ell)} -I_T(k,0) \Bigr) 
$$
with $I_T(x,y)= {\displaystyle 1+\frac{x}{T}+\frac{y}{T^2}},$
$$
f= \frac{2\theta a \vp +2a\theta^2+4\theta b}{\theta^2\vp\tau}, \quad 
g=-\frac{4(b+\theta a)}{\theta^2\vp\tau}, \quad h=\frac{2a\theta^2+4b\theta-a^2\vp-2b\vp}{\theta^2\vp\tau},
$$
$$
k=-\frac{\vp+a+\theta}{\theta^2}, \quad 
\ell=\frac{2a^2}{\theta^2\vp\tau}.
$$
Moreover, the remainder $\rho_T(a,b)$ is such that $| \rho_T(a,b) | \leq Q(a,b, \vp, T) e^{-\vp T}$ with $Q$ a rational function.
As a matter of fact, we have previously remark that for $T$ large enough, $\det(M_T(a,b))>0$. 
Consequently, $M_T^{-1}(a,b)$ is well defined. With 
very tedious but straightforward calculations, we obtain that
$$\frac{\gamma^2}{2T} U_T^\prime \Gamma_T(\vp) M_T^{-1}(a,b)U_T=\frac{\gamma^2}{2}\frac{N_T(a,b)}{D_T(a,b)}$$
with
\begin{eqnarray*}
N_T(a,b) &=& \left(\theta^2\vp\tau T^2+(2\theta a \vp +2a\theta^2+4\theta b)T-4(b+\theta a) \right) e^{2\vp T}\\
& & +\left(-2\theta (\theta a+2b)T+8(b+\theta a)\right) e^{\vp T} + Q_1(a,b, \vp, T)\\
D_T(a,b)&=& \left(\theta^2\vp\tau T^2+(2a\theta^2+4b\theta-a^2\vp-2b\vp)T+ 2a^2 \right) e^{2\vp T}\\
& & + \left(-4\theta (\theta a+2b)T-4a^2\right) e^{\vp T} + Q_2(a,b, \vp, T)\\
\end{eqnarray*}
where $Q_1$ and $Q_2$ are polynomial functions, which clearly implies
\eqref{E-LASTLAP}. Finally, Lemma \ref{L-LEMFOND2} follows from the conjunction
of \eqref{DECOMGF}, \eqref{E-LOGDETMT} and \eqref{E-LASTLAP}. \demend

\section*{Appendix C: Proof of Lemma \ref{L-LEMDEC}.}
\renewcommand{\thesection}{\Alph{section}}
\renewcommand{\theequation}{\thesection.\arabic{equation}}
\setcounter{section}{3}
\setcounter{equation}{0}

It follows from \eqref{DEFZTAB} together with  It\^o's formula \eqref{ITO} that
\begin{equation}
\label{NEWDECZTAB}
 \cZ_T(a,b) = \frac{a}{2} X_T^2 -\frac{aT}{2}-a\overline{X}_T X_T + b \int_0^T X_t^2 dt - b T (\overline{X}_T)^2.
\end{equation}
In addition, we already saw at the beginning of Section \ref{S-KSL} that we can split
$X_T=Y_T+m_T$ and $\overline{X}_T=\overline{Y}_T+ \mu_T$.
Consequently, we have the decomposition
\begin{equation*}
\cZ_T(a,b)= \cZ_T^0 (a,b)+\cZ_T^1 (a,b) + \cZ_T^2 (a,b),
\end{equation*}
where $\cZ_T^0 (a,b) = \dE\left[ \cZ_T(a,b)\right]$,
\begin{eqnarray*}
\cZ_T^1 (a,b) &=& a(m_T - \mu_T) Y_T -am_T \overline{Y}_T +2b \int_0^T (m_t-\mu_T) Y_t dt,\\
\cZ_T^2 (a,b) &=& \frac{a}{2}\Bigl(Y_T^2-\dE [Y_T^2]\Bigr) - a\Bigl(\overline{Y}_T Y_T - \dE[\overline{Y}_T Y_T]\Bigr) 
-bT\Bigl((\overline{Y}_T)^2-\dE\Bigl[(\overline{Y}_T)^2\Bigr]\Bigr) \\
& & \hspace{2cm} +b \left( \int_0^T Y_t^2 dt -\dE \left[ \int_0^T Y_t^2 dt\right]\right). 
\end{eqnarray*}
By using the same notations as in Chapters 2 and 6 of Janson \cite{Janson97}, we clearly have
\begin{equation*}
 \cZ_T^0 (a,b) \in H^{:0:}, \quad  \cZ_T^1 (a,b) \in H^{:1:}, \quad \cZ_T^2 (a,b) \in H^{:2:},
\end{equation*}
where $ H^{:n:}$ stands for the homogeneous chaos of order $n$. Hence, we deduce from Theorem 6.2 of \cite{Janson97} that
\begin{equation}
\label{DECKL}
\cZ_T(a,b) = \dE[\cZ_T(a,b)] + \sum_{k=1}^{\infty} \alpha_k ^{T}\left(\varepsilon_k^2-1\right) +\beta_k ^{T}\varepsilon_k 
\end{equation}
where $(\varepsilon_k)$ are independent standard $\cN(0,1)$ random variables. We also obtain from
Theorem 6.2 of \cite{Janson97}  that
\begin{equation}
\label{UPBSBETA}
\sum_{k=1}^{+\infty} (\beta_k^T)^2=\dE\left[\left(\cZ_T^1 (a,b)\right)^2\right].
\end{equation}
In addition, some rough estimates give us that the right-hand side of \eqref{UPBSBETA} is uniformly bounded by some
constant $B>0$, depending only on $a$ and $b$. As a matter of fact, it exists some constant $\zeta(a,b)>0$ such that
\begin{equation}
\label{UPBSBETAB}
 \dE\left[(\cZ_T^1 (a,b))^2\right] \leq \zeta(a,b)\Bigl(\dE[ Y_T^2 ]+ \dE[ (\overline{Y}_T)^2 ]+
 \dE[ \Delta_T^2]\Bigr),
\end{equation}
where
$$
 \dE\left[ Y_T^2 \right] = \int_0^T e^{2\theta (T-t)}dt \leq -\frac{1}{2\theta}, \hspace{0.5cm}
 \dE\left[ (\overline{Y}_T)^2 \right] \leq \frac{1}{T}\int_0^T \mathbb{E}\left[ Y_t^2 \right]ds \leq -\frac{1}{2\theta},
$$
and
\begin{eqnarray*}
\dE[ \Delta_T^2] &=& \dE\left[ \left(\int_0^T(m_t-\mu_T)Y_tdt\right)^2 \right] = \frac{\gamma^2}{\theta^2}
 \dE\left[ \left(\int_0^T\Bigl(\frac{1-e^{\theta T}}{\theta T}+e^{\theta t}\Bigr)Y_tdt\right)^2 \right],\\
 &\leq & \frac{2\gamma^2}{\theta^4 }\frac{1}{T} \int_0^T \dE\left[ Y_t^2 \right]dt + \frac{2\gamma^2}{\theta^2} \int_0^T e^{\theta t} dt\int_0^T e^{\theta t}
 \dE\left[ Y_t^2 \right]dt \leq \frac{-2\gamma^2}{\theta^5 }.
\end{eqnarray*}
Therefore, we obtain \eqref{BOUNDBETAT} from \eqref{UPBSBETA} and \eqref{UPBSBETAB}. It now remains to show that
it exists some constant $A>0$ that do not depend on $T$, such that $|\alpha_k^T| \leq A$ 
for all $k \geq 1$. Since $\cD_\cL$ is an open set and 
the origin belongs to the interior of $\cD_\cL$, it exists $\varepsilon >0$ such that 
$\{(xa,xb) \in \cD_\cL \ \slash   \abs{x} < \varepsilon \} \subset \cD_\cL$.
For all $(a,b) \in \cD_\cL$ and for $T$ large enough, we deduce from Lemma \ref{L-LEMFOND2} that
$\exp(T \cL_T(xa,xb)) =\dE[ \exp (x\cZ_T(a,b)) ]$ is finite.
It means that the Laplace transform of $\cZ_T(a,b)$ is well defined on $[-\varepsilon,\varepsilon]$. Hence, Theorem 6.2 of \cite{Janson97} 
ensures that the characteristic function of $\cZ_T(a,b)$ is analytic in the strip 
$$\Bigl\{ z \in \dC \ \slash \ \abs{\textrm{Im} z}<\frac{1}{2}(\max_{k \geq 1} \abs{\alpha_k^T})^{-1} \Bigr\}. $$
So, we necessarily obtain that for $T$ large enough,
$\max \abs{\alpha_k^T} < A$ 
with $A=1/2\veps$. Hereafter, the decomposition of $\cL_T(xa,xb)$ given in Lemma  \ref{L-LEMDEC}, directly follows
from equation (6.7) in Theorem 6.2 of Janson \cite{Janson97}. Our goal is now to pass through the limit in $\cL_T(xa,xb)$. 
If we choose $x\in \dR$ such that $|x|\leq 1/4A$, we have
\begin{equation}
\label{LIMNEGTAB}
\lim_{T \rightarrow \infty}\frac{1}{2T} \sum_{k=1}^{\infty}\frac{(x\beta_k ^{T})^2}{1-2x\alpha_k ^{T}} \leq 
\lim_{T \rightarrow \infty} \frac{B}{16A^2T}  = 0.
\end{equation}
Moreover, we deduce from Lemma \ref{L-LEMFOND2} that
\begin{equation}
\label{LIMLTAB}
\lim_{T \rightarrow \infty} \cL_T(xa,xb) = \cL(xa,xb)= -\frac{1}{2} \left(xa + \theta + \sqrt{\theta^2-2xb} \right).
\end{equation}
Furthermore, it follows from the properties of $(X_T)$ and $(\overline{X}_T)$ given at the beginning of Section \ref{S-KSL} that
$$
\lim_{T \rightarrow \infty} \dE[X_T^2]=\frac{\gamma^2}{\theta^2}- \frac{1}{2 \theta}
\hspace{1cm}  \text{and} \hspace{1cm} 
\lim_{T \rightarrow \infty} \dE[(\overline{X}_T)^2]=\frac{\gamma^2}{\theta^2},
$$
which clearly implies
$$
\lim_{T \rightarrow \infty}\frac{1}{T} \dE[X_T^2]=0, \hspace{0.5cm}\lim_{T \rightarrow \infty} \frac{1}{T} \int_0^T \dE[X_t^2]dt=\frac{\gamma^2}{\theta^2}- \frac{1}{2 \theta},
 \hspace{0.5cm}\lim_{T \rightarrow \infty} \dE[X_T\overline{X}_T]=0.
$$
Then, we find from \eqref{NEWDECZTAB} that
\begin{equation}
\label{LIMZTAB}
\lim_{T \rightarrow \infty} \frac{1}{T}  \dE[\cZ_T(xa,xb)]=-\frac{x}{2}\Bigl(a+ \frac{b}{\theta} \Bigr).
\end{equation}
Finally, we obtain from the decomposition of $\cL_T(xa,xb)$,
\eqref{LIMNEGTAB}, \eqref{LIMLTAB}, \eqref{LIMZTAB} that
\begin{equation}
\lim_{T \rightarrow +\infty} \frac{1}{2T} \sum_{k=1}^{\infty}\Bigl(\log(1-2x\alpha_k)+2x\alpha_k^T \Bigr) 
=\frac{1}{2\pi} \int_{\mathbb{R}} f_x(bg(y))\,dy,
\label{WEAKCVG1}
\end{equation}
where the spectral density $g$ is given by \eqref{SPECDENS} and for all $x\in \dR$ such that $|x|\leq 1/4A$,
$$ 
f_x(y)=\frac{1}{2}\Bigl(\log(1-2xy) + 2xy \Bigr).
$$
Hence, it follows from \eqref{WEAKCVG1} together with the elementary Taylor expansion of the logarithm
and classical complex analysis results that, for any integer $p \geq 2$,
\begin{equation*}
\lim_{T \rightarrow +\infty} \frac{1}{T} \sum_{k=1}^{\infty}(\alpha_k^T )^p
= \frac{1}{2\pi} \int_{\mathbb{R}} (bg(y))^p\,dy,
\end{equation*}
Therefore, we obtain the weak convergence \eqref{L-LIMNUT} on the class of functions $\cF$ from the Stone-Weierstrass theorem, which completes the proof
of Lemma \ref{L-LEMDEC}.
\demend

\vspace{-2ex}
\bibliographystyle{acm}

\end{document}